\title{Hyperbolic structures for Artin-Tits groups of spherical type}
\author{Matthieu Calvez}
\address{Matthieu Calvez, 
Departamento de Matem\'{a}tica y Estad\'istica , Universidad de La Frontera, Francisco Salazar 1145, Temuco, Chile}
              \email{matthieu.calvez@ufrontera.cl}
\author{Bert Wiest}
\address{Bert Wiest, Univ Rennes, CNRS, IRMAR - UMR 6625, F-35000 Rennes, France}
\email{bertold.wiest@univ-rennes1.fr}
\newtheorem{theorem}{Theorem}[section]
\newtheorem{lemma}[theorem]{Lemma} 
\newtheorem{proposition}[theorem]{Proposition}
\newtheorem{corollary}[theorem]{Corollary}
\newtheorem{conjecture}[theorem]{Conjecture} 
\theoremstyle{definition}
\newtheorem{observation}[theorem]{Observation}
\newtheorem{question}[theorem]{Question}
\def\co{\colon \thinspace}
\def\Dn{\mathcal{D}_n} 
\def\Dnpo{\mathcal{D}_{n+1}} 
\def\Bnpo{\mathcal{B}_{n+1}} 
\renewcommand{\phi}{\varphi}
\def\xdil{1.3} 
\def\eps{.25} 
\def\rad{.1} 
\begin{document}

\begin{abstract}
The goal of this mostly expository paper is to present several candidates for hyperbolic structures on irreducible Artin-Tits groups of spherical type and to elucidate some relations between them. Most constructions are algebraic analogues of previously known hyperbolic structures on Artin braid groups coming from natural actions of these groups on curve graphs and (modified) arc graphs of punctured disks. 
\end{abstract}

\maketitle


\section{Introduction}\label{S:Introduction}
Given a group $G$ and a generating set $X$ of $G$, the word metric $d_X$ turns $G$ into a metric space; this space is $(1,1)$-quasi-isometric to $\Gamma(G,X)$, the Cayley graph of $G$ with respect to $X$, endowed with the usual graph metric where each edge is identified to an interval of length 1. 
Hyperbolic structures on groups were recently introduced and studied in \cite{ABO}. 
A \emph{hyperbolic structure} on a group $G$ is a generating set $X$ of $G$ such that $(G,d_X)$ is Gromov-hyperbolic; note that $X$ must be infinite whenever $G$ is not itself hyperbolic.

In this paper, we are interested in hyperbolic structures on Artin-Tits groups of spherical type. One motivation is trying to prove that irreducible Artin-Tits groups of spherical type 
are hierarchically hyperbolic \cite{BehrstockHagenSisto2, BehrstockHagenSisto3}, where the hierarchical structure should come from the hierarchy of parabolic subgroups of these groups.

After \cite{CalvezWiest1,CalvezWiest2}, we know that irreducible Artin-Tits groups of spherical type admit non-trivial hyperbolic structures; i.e.\ each irreducible Artin-Tits group of spherical type~$A$ contains an (infinite) generating set $X_{abs}^A$ such that the corresponding Cayley graph $\Gamma(A,X_{abs}^A)$  is a Gromov-hyperbolic metric space with \emph{infinite diameter}. This hyperbolic structure was defined in a purely algebraic manner, using only the Garside structure on~$A$, and it consists of the set of the so-called \emph{absorbable elements} (to which we must add the cyclic subgroup generated by the square of the so-called Garside element if $A$ is of dihedral type). We shall briefly review this construction in Section \ref{S:ATHyp}.

Unfortunately, these absorbable elements are poorly understood -- for instance we do not know any polynomial-time algorithm which recognizes whether any given element belongs to $X_{abs}^A$ -- and this makes it quite difficult to work with the graph $\Gamma(A,X_{abs}^A)$.  

In this paper we generalize to any irreducible Artin-Tits group of spherical type some well-known hyperbolic structures on Artin's braid groups with $n+1$ strands $\mathcal B_{n+1}$ (a.k.a.\ Artin-Tits groups of type~$A_n$), $n\geqslant 3$. 

Because it can be identified with the mapping class group of a $n+1$ times punctured disk $\mathcal D_{n+1}$, Artin's braid group on $n+1$ strands admits nice actions on the curve graph of $\Dnpo$ (denoted by $\mathcal C(\Dnpo)$), the arc graph of $\Dnpo$ (denoted by $\mathcal A(\Dnpo)$) and the graph of arcs in $\Dnpo$ both of whose extremities lie in $\partial\Dnpo$ (denoted by $\mathcal A_{\partial}(\Dnpo)$). All these graphs can be shown to be connected and Gromov-hyperbolic; this was first shown in \cite{MasurMinsky1} but the circle of ideas around \cite{HPW,PS} provides simpler arguments. 

All these actions are cobounded (actually cocompact); 
according to a standard argument (Lemma \ref{L:Main}, close in spirit to Svarc-Milnor's lemma  \cite[Section 3.2]{ABO}), 
we extract from each of these actions 
a hyperbolic structure on $\mathcal B_{n+1}$, which consists of the union of the stabilizers of a (finite) family of representatives of the orbits of vertices. 

Each of these generating sets can be algebraically described in terms of the \emph{parabolic subgroups} of the Artin-Tits group of type $A_n$, allowing to extend the definitions to any irreducible Artin-Tits group of spherical type.
Given an irreducible Artin-Tits group of spherical type $A$, we define:
\begin{itemize}
\item $X_P^A$ is the union of all proper irreducible standard parabolic subgroups of $A$ and the cyclic subgroup generated by the square of the Garside element; 
\item $X_{NP}^A$ is the union of the normalizers of all proper irreducible standard parabolic subgroups of $A$;
\item $X_{abs}^A$ is the set of absorbable elements (together with the cyclic subgroup generated by the square of the Garside element, if $A$ is of dihedral type: 2 generators).
\end{itemize}

Note that $X_{NP}^A$ contains the cyclic subgroup generated by the square of the Garside element (which is central). Similarly for $X_{abs}^A$: any power of the Garside element can be written as a product of at most 3 absorbable elements, provided $A$ is not of dihedral type  \cite[Example 3]{CalvezWiest1}. Therefore the center of $A$ has bounded diameter with respect to the word metric on $A$ induced by any of the above generating sets. 

We then study the relationships between $X_P^A$, $X_{NP}^A$ and  $X_{abs}^A$. Following~\cite{ABO}, given two generating sets $X,Y$ of a group $G$, we write $X\preccurlyeq Y$ if the identity map from $(G,d_Y)$ to $(G,d_X)$ is Lipschitz (or equivalently, if $\sup_{y\in Y} d_X(1_G,y)<\infty$). The sets $X$ and $Y$ are \emph{equivalent} if both $X\preccurlyeq Y$ and $Y\preccurlyeq X$ hold (or equivalently, if the identity map is a bilipschitz equivalence between $(G,d_X)$ and $(G,d_Y)$).

Table \ref{Table} summarizes the main contents of this paper, for any irreducible Artin-Tits group of spherical type $A$ with at least 3 generators. Vertical arrows indicate the identity of $A$. The \emph{graph of irreducible parabolic subgroups} and the \emph{additional length graph} (denoted by $\mathcal C_{parab}(A)$ and $\mathcal C_{AL}(A)$, respectively) were defined in \cite{CGGMW} and \cite{CalvezWiest1}, respectively. 
For Artin-Tits groups \emph{of type $A$}, all the mentioned generating sets are hyperbolic structures. In any case, all spaces under consideration have infinite diameter (Corollary \ref{C:InfDiam}). 

\newcommand{\xdownarrow}[1]{%
  {\left\downarrow\vbox to #1{}\right.\kern-\nulldelimiterspace}
}
\begin{center}
\begin{table}[h]
\begin{tabular}{|c|ccc|c|}
\hline
\multicolumn{1}{|c|}{Gen.\ set} & \multicolumn{3}{c|}{Cayley graph} & Hyperbolicity\\
\hline
$X_P^A$ & & $\Gamma(A,X_P^A)$ & generalized $\mathcal A_{\partial}(\Dnpo)$ & Conjectured\\

\rotatebox[origin=c]{270}{$\preccurlyeq$}  & & $\xdownarrow{1cm}$ & Lipschitz. Conj.\,\ref{C:StrictInequalities}(ii): not equivalent& \\

$X_{NP}^A$ & & $\Gamma(A,X_{NP}^A)$ & generalized $\mathcal C(\Dnpo)$, & Conjectured\\
& & & q.isom. to $\mathcal C_{parab}$ \cite{CGGMW} & \\
\rotatebox[origin=c]{270}{$\preccurlyeq$}  & & $\xdownarrow{1cm}$ & Lipschitz \cite{CalvezWiest1,AntolinCumplido}. Conj.\,\ref{C:StrictInequalities}(i): equivalent& \\
  
$X_{abs}^A$ & & $\Gamma(A,X_{abs}^A)$ &  q.isom. to $\mathcal C_{AL}(A)$ \cite{CalvezWiest1} & Proved \cite{CalvezWiest1,CalvezWiest2}\\
\hline
\end{tabular}
\caption{Summary of the main results presented in the paper when $A$ has at least 3 generators.}\label{Table}
\end{table}
\end{center}

A space similar to $\Gamma(A,X_P^A)$ was defined in \cite{CharneyCrisp}; that paper gives necessary and sufficient conditions on an Artin-Tits group (not necessarily of spherical type) for the union of its standard parabolic subgroups of spherical type to be a hyperbolic structure. However, when the group is itself of spherical type, this results in a Cayley graph of finite diameter. 
It is also interesting to note that a construction analogous to $\mathcal C_{parab}(A)$ has been recently announced --and conjectured to be hyperbolic-- for each Artin-Tits group of FC type \cite{morris}.  

When $A$ is of spherical dihedral type, all three sets $X_P^A$, $X_{NP}^A$ and $X_{abs}^A$ are hyperbolic structures but their orders differ slightly, namely $X_P^A$ and $X_{NP}^A$ are equivalent to each other and smaller than $X_{abs}^A$ -- see Proposition~\ref{P:Dihedral}; also, all corresponding Cayley graphs have infinite diameter. 


The plan of the paper is as follows. In Section \ref{S:Prelim} we review necessary definitions and facts about Artin-Tits groups of spherical type; we also prove Lemma \ref{L:Main}. The latter is used in Section~\ref{S:BnHyp} to describe hyperbolic structures on $\mathcal B_{n+1}$; at the same time, these structures are reinterpreted algebraically in terms of parabolic subgroups of $\mathcal B_{n+1}$. In Section~\ref{S:ATHyp}, we justify the assertions and present the conjectures from Table~\ref{Table}.


\section{Preliminaries}\label{S:Prelim}

\subsection{Preliminaries on Artin-Tits groups of spherical type}\label{SS:PrelimArtin}

Let $S$ be a finite set. A \emph{Coxeter matrix} over $S$ is a symmetric matrix $M=(m_{s,t})_{s,t\in S}$ with $m_{s,s}=1$ for $s\in S$ and $m_{s,t}\in \mathbb N_{\geqslant 2}\cup \{\infty\}$ for $s\neq t\in S$.  The \emph{Coxeter graph} 
$\Gamma=\Gamma_S=\Gamma(M)$ associated to $M$ is a labeled graph whose vertices are the elements of~$S$ and in which two distinct vertices are connected by an edge labeled by $m_{s,t}$ whenever $m_{s,t}\geqslant 3$.

For two symbols $a,b$ and an integer $m$, define 
$$\Pi(a,b;m)=\begin{cases} (ab)^k & \text{if $m=2k$,}\\ (ab)^ka & \text{if $m=2k+1$.}\\ \end{cases}$$
The Artin-Tits system associated to $M$ (or to $\Gamma$) is the pair $(A,S)$ where $A=A_S=A_{\Gamma}$ is the group presented by 
$$\left\langle S\right.\left|\ \Pi(s,t;m_{s,t})=\Pi(t,s;m_{s,t}), \forall s\neq t\in S, \text{with}\ m_{s,t}<\infty\right\rangle.$$

As relations are homogeneous, there is a well-defined homomorphism $\ell:A_S\longrightarrow \mathbb Z$ which sends each $s\in S$ to 1. The \emph{rank} of $A_S$ is the number of elements of $S$. 

Given a subset $T$ of $S$, denote by $\Gamma_T$ the subgraph of $\Gamma_S$ induced by the vertices in~$T$ and by $A_T$ the subgroup of $A_S$ generated by $T$. It is a theorem of Van der Lek \cite{VanDerLek} 
that $(A_T,T)$ is the Artin-Tits system associated to the graph $\Gamma_T$. Such a subgroup $A_T$ ($T\subset S$) is called a \emph{standard parabolic subgroup} of $A_S$. A \emph{parabolic subgroup} of $A_S$ is a subgroup which is conjugate to some standard parabolic subgroup. 

The group $A_S$ is said to be \emph{irreducible} if the graph $\Gamma_S$ is connected, equivalently if $A_S$ cannot be expressed as the direct product of two standard parabolic subgroups. The group $A_S$ is of \emph{spherical type} if the Coxeter group obtained as the
quotient of $A_S$ by the normal closure of the squares of elements in $S$ is finite. The group $A_S$ is said to be of \emph{dihedral type} if it has rank~2 (in this case, the corresponding Coxeter group is exactly the usual dihedral group).

Irreducible Artin-Tits groups of spherical type have been classified \cite{Coxeter} (see also \cite{Humphreys} for a modern exposition) into type $A_n$~($n\geqslant 1$), $B_n$~($n\geqslant 2$), $D_n$~($n\geqslant 4$), $E_6$, $E_7$, $E_8$, $F_4$,  $H_3$, $H_4$ and $I_{2m}$ ($m\geqslant 5$). Figure \ref{F:Coxeter} summarizes this classification. Note that for $n\geqslant 1$, the Artin-Tits group of type $A_n$ is precisely Artin's braid group $\mathcal B_{n+1}$. In this paper, we will be concerned only with irreducible Artin-Tits groups of spherical type. For an integer $m=3,4$, we write also $I_{2m}$ for $A_2$ and $B_2$ respectively. 

\begin{figure}
\includegraphics[width=13cm]{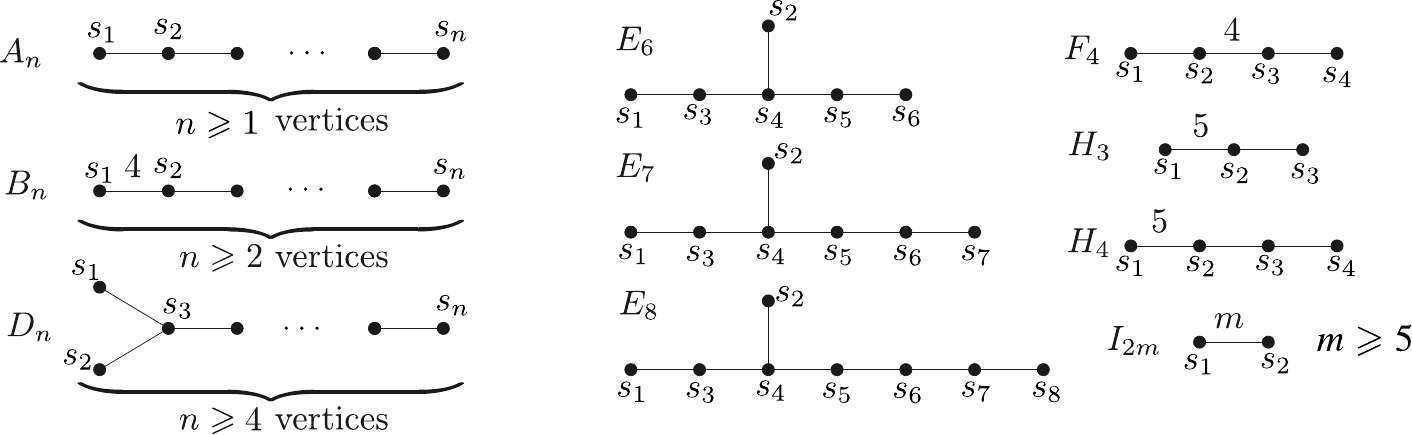}
\caption{Coxeter's classification of irreducible Artin-Tits groups of spherical type}\label{F:Coxeter}
\end{figure}

We recall that each irreducible Artin-Tits groups of spherical type $A=A_S$ can be equipped with a finite Garside structure -- this was proved in \cite{BrieskornSaito,Deligne}.
The reader is referred to \cite[Section 2]{GebhardtGM} for a gentle introduction to Garside structures. We just recall the existence of a Garside element $\Delta_S$; either $\Delta_S$ or $\Delta_S^2$ generates the center of~$A_S$ and we denote by $\Omega_S$ this minimal central element. 
(It is known that $\Omega_S=\Delta_S$ for groups of type $A_1$, $B_n$ ($n\geqslant 2$), $D_n$~($n$~even), $E_7$, $E_8$,  $F_4$, $H_3$, $H_4$ and $I_{2m}$~($m$~even), and $\Omega_S=\Delta_S^2$ for groups of type $A_n$ ($n\geqslant 2)$, $D_n$~($n$~odd), $E_6$, and $I_{2m}$~($m$~odd)). Similarly, for each irreducible standard parabolic subgroup $A_T$ ($T\subset S$), there is a Garside element $\Delta_T$ and $\Omega_T=\Delta_T$ or $\Delta_T^2$ generates the center of $A_T$. Moreover, if $P$ is any irreducible parabolic subgroup of $A_S$, then $P=a^{-1}A_T a$ for some element~$a\in A$ and some $T\subset S$ -- note that this expression is by no means unique. However there is a well-defined element $\Omega_P=a^{-1}\Omega_Ta$ which does not depend on $a$ nor on $T$ in the above expression of $P$ and generates the center of $P$ (see \cite{CGGMW}). 

The two following results on conjugacy of parabolic subgroups will be useful later.

\begin{proposition}\cite[Theorem 6.1]{Paris}\label{P:Paris}
Let $A=A_S$ be an irreducible Artin-Tits group of spherical type and let $T\subset S$ such that $A_T$ is an irreducible Artin-Tits group. Then 
$$N_{A_S}(A_T)=Z_{A_S}(\Omega_T).$$\end{proposition}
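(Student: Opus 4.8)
The plan is to prove the two inclusions $N_{A_S}(A_T)\subseteq Z_{A_S}(\Omega_T)$ and $Z_{A_S}(\Omega_T)\subseteq N_{A_S}(A_T)$ separately. For the first, suppose $g\in N_{A_S}(A_T)$, so that conjugation by $g$ is an automorphism of $A_T$. Since $A_T$ is irreducible of spherical type, its center is the infinite cyclic group $\langle\Omega_T\rangle$, which is therefore characteristic; hence conjugation by $g$ sends $\Omega_T$ to a generator, i.e.\ $g\Omega_Tg^{-1}=\Omega_T^{\pm1}$. Applying the length homomorphism $\ell\colon A_S\to\Z$ and using that $\ell(\Omega_T)>0$ (as $\Omega_T$ is a nontrivial positive element), we get $\ell(g\Omega_Tg^{-1})=\ell(\Omega_T)>0$, which rules out the exponent $-1$. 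Thus $g\Omega_Tg^{-1}=\Omega_T$, that is, $g\in Z_{A_S}(\Omega_T)$.

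For the reverse inclusion I would take $g\in Z_{A_S}(\Omega_T)$ and set $P=gA_Tg^{-1}$, an irreducible parabolic subgroup of $A_S$. By the conjugation-equivariance of the assignment $Q\mapsto\Omega_Q$ (well defined by the discussion preceding the statement, see \cite{CGGMW}), one has $\Omega_P=g\Omega_Tg^{-1}=\Omega_T$. So $P$ and $A_T$ are two irreducible parabolic subgroups sharing the same associated central element, and it suffices to show that an irreducible parabolic subgroup is determined by its central element: this gives $P=A_T$, i.e.\ $g\in N_{A_S}(A_T)$.

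To establish this determinacy I would show that $A_T$ is the \emph{parabolic closure} of $\langle\Omega_T\rangle$, i.e.\ the smallest parabolic subgroup of $A_S$ containing $\Omega_T$. This closure is well defined because an intersection of parabolic subgroups of a spherical-type Artin-Tits group is again parabolic. Granting minimality, since $\Omega_T$ is central in $P$ we have $\Omega_T\in P$, whence $A_T\subseteq P$; as $A_T$ and $P=gA_Tg^{-1}$ are nested parabolic subgroups of the same rank (both having defining graph $\Gamma_T$), and a proper parabolic subgroup has strictly smaller rank, we conclude $A_T=P$.

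The heart of the matter, and the main obstacle, is precisely the minimality claim that the parabolic closure of $\langle\Omega_T\rangle$ equals $A_T$. For this I would argue at the level of Garside normal forms: $\Omega_T$ equals $\Delta_T$ or $\Delta_T^2$, a positive element whose support (the set of standard generators occurring in it) is exactly $T$, and one shows that any parabolic subgroup containing such an element must contain each generator of its support, hence must contain $A_T$. Carrying this out rigorously requires the structural results on how parabolic subgroups of spherical type contain and intersect given elements — the techniques of \cite{CGGMW} — which is where the real work lies.
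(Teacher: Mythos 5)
First, note that the paper does not prove this proposition at all: it is quoted as Theorem~6.1 of \cite{Paris}, so the only ``proof in the paper'' is that citation, and Paris's original argument proceeds by self-contained 1997 technology (the combinatorics of ribbons and quasi-centralizers in the Artin monoid, from which the structure of $N_{A_S}(A_T)$ is derived directly). Within your attempt, the inclusion $N_{A_S}(A_T)\subseteq Z_{A_S}(\Omega_T)$ is correct and complete: conjugation by a normalizer is an automorphism of $A_T$, the center of an irreducible spherical-type Artin--Tits group is infinite cyclic generated by $\Omega_T$ (Brieskorn--Saito \cite{BrieskornSaito}, Deligne \cite{Deligne}), and the length homomorphism rules out inversion. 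That is a clean, elementary argument.

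The genuine gap is in the reverse inclusion, which is the hard direction of the theorem, and it is twofold. First, by your own admission the key step --- that an irreducible parabolic subgroup is determined by its central element, equivalently that $A_T$ is the parabolic closure of $\Omega_T$ --- is not proven but only sketched, with ``the real work'' deferred to the techniques of \cite{CGGMW} (existence of parabolic closures, intersections of parabolic subgroups being parabolic, a parabolic subgroup contained in a parabolic $P$ being parabolic \emph{in} $P$, rank being an isomorphism invariant). Second, and more seriously, this deferral is at high risk of circularity: \cite{CGGMW} postdates \cite{Paris} by two decades and uses Paris's normalizer theorem --- the very statement you are proving --- as a standard preliminary in developing exactly that machinery; arguments there of the form ``$x$ commutes with $\Omega_X$, hence $x$ normalizes $A_X$'' are applications of Theorem~6.1 of \cite{Paris} (the simultaneous standardization result quoted in this paper as Proposition~\ref{P:SimultStandard} is proved this way). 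So one cannot outsource the determinacy of $P$ by $\Omega_P$ to \cite{CGGMW} without first verifying that the specific results invoked are established there independently of Paris's theorem; as written, your proof is at best incomplete and quite possibly circular. If you want a proof rather than a reduction, you must either reproduce Paris's monoid-theoretic argument or give an independent proof of the parabolic-closure statement for $\Omega_T$.
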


\begin{proposition}\cite[Section 11]{CGGMW}\label{P:SimultStandard}
Let $A=A_S$ be an irreducible Artin-Tits group of spherical type. Let $P,Q$ be two irreducible parabolic subgroups of $A_S$ and suppose that $\Omega_P$ and $\Omega_Q$ commute. Then there exists $g\in A$ and $T_1,T_2\subset S$ such that 
$g^{-1}Pg=A_{T_1}$ and $g^{-1}Qg=A_{T_2}$.
\end{proposition}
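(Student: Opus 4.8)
The plan is to reduce, by a single conjugation, to the case where $P$ is standard, and then to conjugate $Q$ to standard form by an element of the normalizer $N_{A_S}(A_{T_1})$, which automatically preserves the standardness of $P$. The first step rests on the equivariance of the assignment $P \mapsto \Omega_P$: if $P = a^{-1}A_T a$ then $\Omega_P = a^{-1}\Omega_T a$, so $\Omega_{hPh^{-1}} = h\Omega_P h^{-1}$ for every $h \in A_S$, and hence the hypothesis $\Omega_P\Omega_Q = \Omega_Q\Omega_P$ is preserved when $(P,Q)$ is replaced by $(hPh^{-1},hQh^{-1})$. Choosing $h$ with $hPh^{-1}=A_{T_1}$ standard, I may therefore assume $P = A_{T_1}$ with $\Omega_{T_1}$ commuting with $\Omega_Q$; the conjugator for the original pair is recovered afterwards by composing with $h$.

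Next I would invoke Proposition \ref{P:Paris}, in the form $N_{A_S}(Q) = Z_{A_S}(\Omega_Q)$ (obtained from the standard case by conjugating, using that $Q$ is irreducible). The commutation then becomes the two membership statements $\Omega_Q \in Z_{A_S}(\Omega_{T_1}) = N_{A_S}(A_{T_1})$ and $\Omega_{T_1} \in N_{A_S}(Q)$. In particular it suffices to produce $g \in M := N_{A_S}(A_{T_1}) = Z_{A_S}(\Omega_{T_1})$ with $g^{-1}Qg$ standard: since such a $g$ satisfies $g^{-1}A_{T_1}g = A_{T_1}$, both $P$ and $Q$ become standard at once. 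The problem is thereby localized inside $M$, which already contains $\Omega_Q$.

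To carry out this localization I would argue by induction on the rank $|S|$, using the lattice structure of parabolic subgroups. Let $R$ be the \emph{parabolic closure} of $\langle A_{T_1}, Q\rangle$, the smallest parabolic subgroup of $A_S$ containing both. If $R$ is proper, I choose $k$ with $k^{-1}Rk = A_U$ standard for some $U \subsetneq S$; then $k^{-1}A_{T_1}k$ and $k^{-1}Qk$ are irreducible parabolic subgroups of the smaller spherical-type group $A_U$ (each lands in a single irreducible factor of $A_U$, should $A_U$ be reducible) with commuting $\Omega$-elements, so the induction hypothesis yields a common conjugator $v \in A_U \subseteq N_{A_S}(A_U)$ standardizing both. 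Tracking conjugators back through $k$ and $h$ then gives the desired $g$.

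The hard part will be the base case $R = A_S$, where neither $P$ nor $Q$ equals $A_S$ yet their parabolic closure is everything, so that no rank reduction is available; this is precisely the situation in which the full strength of the commutation hypothesis must be used. I would treat it through the finer structure of $M = Z_{A_S}(\Omega_{T_1})$ — its ribbon/Garside structure, in which $\Omega_{T_1}$ is central and $Q$ reappears as a standardizable parabolic-type subgroup with $\Omega_Q \in M$ — rather than through Proposition \ref{P:Paris} alone. Upgrading the element-level fact $\Omega_Q \in M$ to the subgroup-level conclusion that some conjugator of $Q$ to standard form may be chosen inside $M$ is the crux, and is exactly the point at which the argument depends on the parabolic-subgroup machinery of \cite{CGGMW} beyond the normalizer identity quoted above.
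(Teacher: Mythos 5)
The first thing to note is that the paper contains no proof of Proposition~\ref{P:SimultStandard} at all: it is imported verbatim from \cite[Section 11]{CGGMW}, so your attempt must be judged as a standalone argument. As such it has a genuine gap --- one you candidly flag yourself. Your reductions are correct but essentially formal: conjugating $P$ to a standard $A_{T_1}$ (using equivariance of $P\mapsto\Omega_P$), observing via Proposition~\ref{P:Paris} that it then suffices to standardize $Q$ by an element of $M=N_{A_S}(A_{T_1})=Z_{A_S}(\Omega_{T_1})$, and the inductive step when the parabolic closure $R$ of $\langle A_{T_1},Q\rangle$ is proper. All of the content of the proposition is concentrated in the remaining case, the one you call the crux: upgrading the element-level fact $\Omega_Q\in Z_{A_S}(\Omega_{T_1})$ to the existence of a standardizing conjugator for $Q$ lying inside $Z_{A_S}(\Omega_{T_1})$ (equivalently, your base case $R=A_S$). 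For that step you offer no argument, only the intention to use the ``ribbon/Garside structure'' of $M$ and ``the parabolic-subgroup machinery of \cite{CGGMW}''. That appeal is circular: the statement being proved \emph{is} the result of \cite{CGGMW}, so deferring its crux back to that machinery leaves nothing proved. Note also that even your scaffolding quietly invokes two substantial theorems of the same paper --- the existence of parabolic closures (i.e.\ that intersections of parabolic subgroups are again parabolic) and heredity (a parabolic subgroup of $A_S$ contained in a parabolic subgroup $A_U$ is a parabolic subgroup of $A_U$) --- neither of which is available in the present paper.

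It may help to see where the difficulty actually lives. The proof in \cite{CGGMW} rests on the structural theory developed there (minimal standardizers, ribbons, parabolic closures) and is intertwined with their characterization of commutation: for distinct irreducible parabolic subgroups, $\Omega_P$ and $\Omega_Q$ commute exactly when $P\subseteq Q$, or $Q\subseteq P$, or $P\cap Q=\{1\}$ and $pq=qp$ for all $p\in P$, $q\in Q$. Once the full theorem is known, one sees a posteriori that your base case is vacuous: in the nested cases the closure of $\langle P,Q\rangle$ is the larger (proper) subgroup, and in the commuting case simultaneous standardization puts $P,Q$ into standard subgroups $A_{T_1},A_{T_2}$ whose generators pairwise commute, so $\Gamma_{T_1\cup T_2}$ is disconnected and hence $T_1\cup T_2\subsetneq S$ by irreducibility of $A_S$, again giving a proper closure. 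But there is no way to establish that the base case is empty without essentially the full strength of the proposition; as written, your induction has no base to stand on.
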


We conclude this section with a few words on the case of dihedral type Artin-Tits groups.

\begin{proposition}\label{P:PureFree}\cite[Theorem 6.2 (2)] {McCammond}
Let $m\geqslant 3$ be an integer. Let $A$ be an Artin-Tits group of type $I_{2m}$. Then the quotient $A/\negthinspace\left\langle \Delta^2\right\rangle$ has a finite index subgroup isomorphic to a free group of rank $m-1$.
\end{proposition}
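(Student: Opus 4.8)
The statement is quoted from \cite{McCammond}; here is the strategy I would follow. Write $A=\langle a,b \mid \Pi(a,b;m)=\Pi(b,a;m)\rangle$ with Garside element $\Delta=\Pi(a,b;m)$, and recall that $\Delta^2$ is central (\S\ref{SS:PrelimArtin}), so that $G:=A/\langle \Delta^2\rangle$ is well defined. The plan is to realize $G$ as the fundamental group of a finite graph of groups whose vertex groups are virtually free and whose edge groups are finite: virtual freeness of $G$ is then immediate, and the rank of a torsion-free finite-index subgroup is pinned down by the rational Euler characteristic.

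First I would produce an explicit presentation of $G$, treating the two parities of $m$ separately. When $m$ is odd one checks that $A$ is the $(2,m)$-torus knot group $\langle x,y \mid x^2=y^m\rangle$, via $x=\Delta$, $y=ab$ (so that $\Delta^2=(ab)^m$), whose center is exactly $\langle x^2\rangle=\langle \Delta^2\rangle$; hence $G\cong \mathbb{Z}/2 \ast \mathbb{Z}/m$. When $m=2k$ is even the situation is genuinely different: setting $w=ab$, the single defining relation becomes $[a,w^k]=1$, so $A\cong\langle a,w \mid [a,w^k]=1\rangle$ with $\Delta=w^k$ central, and therefore $G\cong\langle a,w \mid w^{m}=1,\ [a,w^k]=1\rangle$, which I would recognize as the amalgam $\mathbb{Z}/m \ast_{\mathbb{Z}/2}(\mathbb{Z}\times\mathbb{Z}/2)$, the edge subgroup being $\langle w^k\rangle\cong\mathbb{Z}/2$ and the second vertex group being $\langle a,w^k\rangle\cong\mathbb{Z}\times\mathbb{Z}/2$.

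Next, virtual freeness follows: in the odd case $G$ is a free product of finite groups, while in the even case the vertex group $\mathbb{Z}\times\mathbb{Z}/2$ is virtually infinite cyclic, and a finite graph of virtually free groups over finite edge groups is again virtually free (Bass--Serre theory, in the spirit of Karrass--Pietrowski--Solitar). In both cases every finite subgroup is elliptic, hence conjugate into a vertex group, so torsion is fully controlled. I would then compute $\chi(G)=\tfrac{2-m}{2m}$ by multiplicativity of the rational Euler characteristic over the decomposition, using $\chi(\mathbb{Z}/n)=1/n$ and $\chi(\mathbb{Z}\times\mathbb{Z}/2)=0$. Finally I would exhibit a torsion-free subgroup of index exactly $2m$ as the kernel of a surjection of $G$ onto a finite abelian group that is injective on each finite vertex and edge group: for $m$ odd the natural map $\mathbb{Z}/2 \ast \mathbb{Z}/m \twoheadrightarrow \mathbb{Z}/2\times\mathbb{Z}/m$, and for $m$ even the map $w\mapsto(1,0)$, $a\mapsto(0,1)$ into $\mathbb{Z}/m\times\mathbb{Z}/2$. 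Any such kernel avoids every conjugate of the torsion, hence is free, and multiplicativity gives its rank as $1-2m\cdot\chi(G)=m-1$, as required.

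The main obstacle is the even case. Guided by $B_3=A(I_2(3))\cong\mathrm{PSL}(2,\mathbb{Z})=\mathbb{Z}/2\ast\mathbb{Z}/3$, one is tempted to expect $G$ to always be a free product of two finite cyclic groups; but for even $m$ this fails (already $A^{\mathrm{ab}}\cong\mathbb{Z}^2$ rules out a torus-knot presentation), and one must instead work with the amalgam over a virtually cyclic vertex group, verifying both its virtual freeness and the existence of a torsion-free subgroup of the precise index $2m$ needed to read off the rank $m-1$.
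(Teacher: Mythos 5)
Your proof is correct, but there is almost nothing in the paper to compare it against: the paper does not prove Proposition~\ref{P:PureFree} at all, it imports it wholesale from \cite[Theorem 6.2(2)]{McCammond}. So your argument should be judged as a self-contained replacement for that citation, and as such it works. The two presentations you extract are right: for odd $m$, setting $x=\Delta$, $y=ab$ gives the torus-knot presentation $\langle x,y\mid x^2=y^m\rangle$ with $\Delta^2=x^2=y^m$, whence $A/\langle\Delta^2\rangle\cong\mathbb{Z}/2\ast\mathbb{Z}/m$; for even $m=2k$, setting $w=ab$ turns the Artin relation into $[a,w^k]=1$, and since $\langle\Delta^2\rangle=\langle w^m\rangle$ has index $2$ in the center $\langle\Delta\rangle=\langle w^k\rangle$ (recall $\Omega=\Delta$ in the even dihedral case), the quotient is the amalgam $\mathbb{Z}/m\ast_{\mathbb{Z}/2}(\mathbb{Z}\times\mathbb{Z}/2)$ rather than a free product of finite groups --- a parity subtlety you correctly isolate as the real content. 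The Euler-characteristic bookkeeping is also correct: $\chi(G)=(2-m)/(2m)$ in both cases, the order-$2m$ abelian quotients you write down are injective on all torsion in vertex and edge groups, and $1-2m\chi(G)=m-1$. The one step stated too quickly is ``avoids every conjugate of the torsion, hence is free'': in the even case the kernel does meet conjugates of the infinite vertex group $\mathbb{Z}\times\mathbb{Z}/2$ (in infinite cyclic subgroups), so freeness is not literally a matter of missing the vertex groups; you need either Stallings--Swan (a finitely generated torsion-free virtually free group is free) or the Bass--Serre observation that the kernel acts on the tree of the amalgam with trivial edge stabilizers and free (trivial or infinite cyclic) vertex stabilizers, hence decomposes as a free product of copies of $\mathbb{Z}$ and a free group. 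Since you have already invoked Karrass--Pietrowski--Solitar for virtual freeness, either patch is one line. What your route buys over the paper's citation is an explicit torsion-free subgroup of index exactly $2m$ together with an elementary, checkable computation of its rank; what the citation buys is brevity.
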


\begin{corollary}\label{C:Dihedral}
Let $m$ and $A$ be as in Proposition \ref{P:PureFree}. Let $X$ be any finite generating set of $A$. 
Then the generating set $X\cup\langle \Delta^2\rangle$ is a hyperbolic structure on $A$. All hyperbolic structures obtained in this way are equivalent.
\end{corollary}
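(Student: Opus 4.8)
The plan is to reduce hyperbolicity of $\Gamma(A,X\cup\langle\Delta^2\rangle)$ to that of a Cayley graph of the quotient $A/\langle\Delta^2\rangle$, and then to invoke Proposition~\ref{P:PureFree}. First observe that $\Delta^2$ is central in $A$: since $\Omega_S\in\{\Delta,\Delta^2\}$ generates the center, in either case $\Delta^2$ is central. Hence $N:=\langle\Delta^2\rangle$ is a normal subgroup, the quotient map $\pi\co A\longrightarrow A/N$ makes sense, and $\bar X:=\pi(X)$ is a finite generating set of $A/N$.

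The key (and essentially only substantial) step is to establish the elementary estimate that, for all $g,g'\in A$,
$$d_{\bar X}\bigl(\pi(g),\pi(g')\bigr)\ \leqslant\ d_{X\cup N}(g,g')\ \leqslant\ d_{\bar X}\bigl(\pi(g),\pi(g')\bigr)+1.$$
The left inequality holds because every edge of $\Gamma(A,X\cup N)$ projects under $\pi$ either to an edge labelled by some $\pi(x)$ or to a loop (for edges labelled by elements of $N$), so any path projects to a path of no greater length. The right inequality holds because a geodesic of $\Gamma(A/N,\bar X)$ lifts step by step starting at $g$, ending at a vertex in the coset $g'N$, at the cost of one final edge labelled by an element of $N$ to correct the endpoint. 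Together with surjectivity of $\pi$ on vertices, this shows that $\pi$ is a quasi-isometry between the geodesic metric spaces $\Gamma(A,X\cup N)$ and $\Gamma(A/N,\bar X)$. Since Gromov-hyperbolicity is a quasi-isometry invariant among geodesic spaces, $\Gamma(A,X\cup N)$ is hyperbolic if and only if $\Gamma(A/N,\bar X)$ is; and the latter, being the Cayley graph of the finitely generated group $A/N$ with respect to a finite generating set, is hyperbolic if and only if $A/N$ is word-hyperbolic. By Proposition~\ref{P:PureFree}, $A/N$ contains a free subgroup of finite index, hence is finitely generated virtually free, hence word-hyperbolic. This shows that $X\cup\langle\Delta^2\rangle$ is indeed a hyperbolic structure on $A$.

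For the last assertion, let $X$ and $Y$ be any two finite generating sets of $A$; I claim $X\cup N$ and $Y\cup N$ are equivalent. For $z\in Y\cup N$ one has $d_{X\cup N}(1_A,z)\leqslant 1$ when $z\in N$, while $d_{X\cup N}(1_A,z)\leqslant d_X(1_A,z)$ when $z\in Y$. Since $Y$ is finite and $X$ generates $A$, the quantity $\sup_{z\in Y}d_X(1_A,z)$ is finite, whence $\sup_{z\in Y\cup N}d_{X\cup N}(1_A,z)<\infty$ and therefore $X\cup N\preccurlyeq Y\cup N$. By symmetry $Y\cup N\preccurlyeq X\cup N$, giving the claimed equivalence.

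I expect the only real obstacle to be the first displayed estimate, i.e.\ verifying that collapsing the central cyclic subgroup $N$ to bounded diameter is realized, up to quasi-isometry, precisely by passing to the quotient $A/N$. Once this is in place, the result is a direct appeal to Proposition~\ref{P:PureFree} together with the standard facts that finitely generated virtually free groups are word-hyperbolic and that any two finite generating sets of a finitely generated group give bilipschitz equivalent word metrics.
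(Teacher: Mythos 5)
Your proof is correct and follows essentially the same route as the paper's: both pass to the quotient $A/\langle\Delta^2\rangle$ with the induced finite generating set, identify $\Gamma(A,X\cup\langle\Delta^2\rangle)$ with $\Gamma(A/\langle\Delta^2\rangle,\bar X)$ up to quasi-isometry, and deduce hyperbolicity from Proposition~\ref{P:PureFree} via virtual freeness of the quotient. The only difference is that you spell out the quasi-isometry estimate and the final equivalence of generating sets explicitly, which the paper merely asserts.
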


\begin{proof}
Let $\overline X$ be the set of cosets of elements of $X$ mod $\langle \Delta^2\rangle$; this generates $A/\langle\Delta^2\rangle$.  Observe that $\Gamma(A,X\cup\langle\Delta^2\rangle)$ and $\Gamma(A/\langle \Delta^2\rangle, \overline X)$ are quasi-isometric spaces. From Proposition \ref{P:PureFree} it follows applying Svarc-Milnor lemma that $\Gamma(A/\langle \Delta^2\rangle, \overline X)$ is quasi-isometric 
to a $m-1$-regular tree, hence hyperbolic. Therefore, for any $X$, $\Gamma(A,X\cup\langle \Delta^2\rangle)$ is quasi-isometric to a regular tree, whence the claim. 
\end{proof}


\subsection{Cocompact actions on connected graphs}

The unique result in this subsection will allow us to exhibit a number of hyperbolic structures on Artin's braid group $\mathcal B_{n+1}$. Although this is a standard fact, we include a stand-alone proof for the convenience of the reader.

\begin{lemma}\cite[Lemma 3.2]{MasurMinsky1}\cite[Lemma 10]{KimKoberda}\label{L:Main}
Let $X$ be a connected graph and denote by $d_X$ the graph-metric on $X$ obtained by identifying each edge to an interval of length~1. 
Let $G$ be a group acting by isometries on $X$ such that 
\begin{itemize}
\item[(i)] there exists a \emph{finite} set $A$ of vertices of $X$ such that every vertex orbit has a (non-necessarily unique) representative in $A$,
\item[(ii)] there exists a \emph{finite} set $B$ of edges of $X$ such that every edge orbit has a (non-necessarily unique) representative in $B$,
\item[(iii)] the set $T=\bigcup_{a\in A} Stab_G(a)$ generates $G$. 
\end{itemize}
Then $(G,d_T)$ is quasi-isometric to $(X,d_X)$. 
\end{lemma}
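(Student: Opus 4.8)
The plan is to fix a basepoint vertex $x_0\in A$ and analyse the orbit map $\phi\co G\to X$, $g\mapsto g\cdot x_0$, showing it is a quasi-isometry from $(G,d_T)$ to $(X,d_X)$. Since $G$ acts by isometries and both metrics are left-invariant (the word metric $d_T$ because $T=T^{-1}\ni 1$, the orbit pseudo-metric because the action is isometric), it suffices to compare $d_T(1,g)$ with $d_X(x_0,g\cdot x_0)$ for every $g\in G$. I will establish three things: that $\phi$ is coarsely Lipschitz, that it is coarsely onto, and --- the substantial point --- that it admits a linear lower bound $d_T(1,g)\leqslant \kappa\, d_X(x_0,g\cdot x_0)+\kappa'$.

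The two easy directions use only conditions (i) and (iii). Set $D=\max_{a\in A}d_X(x_0,a)$, finite because $A$ is finite. For the Lipschitz bound, any $t\in T$ fixes some $a\in A$, so $d_X(x_0,t\cdot x_0)\leqslant d_X(x_0,a)+d_X(t\cdot a,t\cdot x_0)\leqslant 2D$; writing $g$ as a $d_T(1,g)$-fold product of elements of $T$ and applying the triangle inequality along the orbit gives $d_X(x_0,g\cdot x_0)\leqslant 2D\cdot d_T(1,g)$. For coarse surjectivity, every vertex $v$ equals $k\cdot a$ with $a\in A$, whence $d_X(v,\phi(k))=d_X(k\cdot a,k\cdot x_0)=d_X(a,x_0)\leqslant D$; as every point of $X$ lies within $\tfrac12$ of a vertex, $\phi(G)$ is $(D+\tfrac12)$-dense.

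The heart of the argument is the lower bound, and the main obstacle is that an $X$-geodesic from $x_0$ to $g\cdot x_0$ will in general leave the orbit $G\cdot x_0$, so one cannot read off group elements directly from its vertices; worse, a naive induction on $d_X$ via coarse density loses an additive $D$ at each step and does not close up. The device that circumvents this is the following observation, which is exactly where condition (iii) is essential: \emph{any $g\in G$ carrying one vertex of $A$ to another has uniformly bounded $d_T$-length}. Indeed, for fixed $a,a'\in A$ the set $\{g : g\cdot a'=a\}$ is a single coset $g_0\, Stab_G(a')$, and since $Stab_G(a')\subseteq T$ each of its elements has $d_T$-length at most $d_T(1,g_0)+1$; taking the maximum over the finitely many pairs $(a,a')$ yields one constant $M$, finite precisely because $T$ generates $G$.

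With this in hand I would lift a geodesic $x_0=y_0,\ldots,y_n=g\cdot x_0$ edge by edge. Using condition (ii), fix finitely many edge-orbit representatives in $B$ together with, for each of their endpoints, a group element conjugating it into $A$. Each edge $\{y_{j-1},y_j\}$ is a translate of some $b\in B$, producing two canonical lifts (one per endpoint) whose ratio lies in a fixed finite set $P\subseteq G$; at each interior vertex $y_j$ the arrival lift and the departure lift differ by an element sending one vertex of $A$ to another, hence of $d_T$-length $\leqslant M$ by the observation above. Telescoping the initial jump $d_T(1,L_1^-)\leqslant M$, the within-edge jumps $\leqslant\max_{\pi\in P}d_T(1,\pi)$, the interior jumps $\leqslant M$, and the terminal jump $d_T(L_n^+,g)\leqslant M$ gives $d_T(1,g)\leqslant \kappa\, n+\kappa'$ with $n=d_X(x_0,g\cdot x_0)$. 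Combined with the Lipschitz and surjectivity statements, this shows $\phi$ is a quasi-isometry and proves the lemma. I expect the only delicate bookkeeping to be the consistent orientation of path edges and the uniform choice of conjugating elements, but these are finite data and present no real difficulty.
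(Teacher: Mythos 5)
Your proof is correct and follows essentially the same route as the paper's: the forward Lipschitz bound via stabilizers fixing points of $A$, the uniform bound $M$ on elements carrying one vertex of $A$ to another (the paper's constant $1+2M_2$, obtained there via orbit representatives $g_a$ rather than your coset argument), and the edge-by-edge use of the finite set $B$ with conjugating elements $\alpha_i,\beta_i$ all appear in the paper's proof with the same role. The only difference is packaging: the paper constructs an explicit coarse inverse $\phi\co \mathcal V_X\to G$ and shows both maps are Lipschitz and quasi-inverse to each other, whereas you verify directly that the orbit map is a quasi-isometric embedding with coarsely dense image, obtaining the lower bound by telescoping lifts along a geodesic --- the same estimates arranged in a different order.
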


\begin{proof}
It is known that $(X,d_X)$ is (1,1)-quasi-isometric to $(\mathcal V_X,d_X)$, the vertex set of $X$ endowed with the restriction of the graph metric on $X$; we focus on $(\mathcal V_X,d_X)$ rather than $X$ itself.

Let $M_1= Diam (A)$. Let $A_0\subset A$ having exactly one representative of each vertex orbit under the action of $G$; for each $a\in A$, choose $g_a\in G$ such that $g_a\cdot a\in A_0$ and let $M_2=\max\{\|g_a\|_T, \ a\in A\}$ (where we denote $\|g_a\|_T=d_T(1_G,g_a)$). 

Let $\{\{u_i,v_i\},1\leqslant i\leqslant m\}$ be an enumeration of the finite set $B$ and 
fix $\alpha_i,\beta_i\in G$ so that $\alpha_i^{-1}\cdot u_i\in A$ and $\beta_i^{-1}\cdot v_i\in A$. 
Let $$M_3=\max_{1\leqslant i\leqslant m}\{\|\alpha_i\|_T, \|\beta_i\|_T\}.$$

Fix $a_0\in A$ and define a map $\psi\co G\longrightarrow \mathcal V_X$ by $g\mapsto g\cdot a_0$.
We first prove that the map $\psi\co (G,d_T)\to (\mathcal V_X,d_X)$ is Lipschitz.
Let $g,h\in G$, let $n=\|g^{-1}h\|_T$ and write $g^{-1}h=t_1\ldots t_n$, with $t_i\in T$. We have 
$$d_X(\psi(g),\psi(h))=d_X(a_0,g^{-1}h\cdot a_0)=d_X(a_0,t_1\ldots t_n\cdot a_0)$$
and
\begin{eqnarray*}
 d_X(a_0,t_1\ldots t_n\cdot a_0) & \leqslant  & d_X(a_0,t_1\cdot a_0)+d_X(t_1\cdot a_0,t_1t_2\cdot a_0)+\ldots\\
    & & \quad \quad\ldots + d_X(t_1\ldots t_{n-1}\cdot a_0,t_1\ldots t_n\cdot a_0)\\
    & = & \sum_{i=1}^n d_X(a_0,t_i\cdot a_0).
    \end{eqnarray*}
But for any $t\in T$ there is at least some $a_t\in A$ so that $t\cdot a_t=a_t$; so we have 
$$d_X(a_0,t\cdot a_0)\leqslant d_X(a_0,a_t)+d_X(a_t,t\cdot a_0)=d_X(a_0,a_t)+d_X(t\cdot a_t,t\cdot a_0)=2d_X(a_0,a_t)\leqslant 2M_1.$$

In summary, we have $$d_X(\psi(g),\psi(h))\leqslant 2M_1d_T(g,h).$$

On the other hand, given any vertex $x$ of $X$, there exist $a\in A$ and  $g\in G$ so that $x=g\cdot a$. If $a,a'\in A$ and $g,g'\in G$ are such that $g\cdot a=g'\cdot a'=x$, we have $g_a\cdot a=g_{a'}\cdot a'\in A_0$ whence $g'^{-1}gg_a^{-1}g_a'$ fixes $a'$ and so lies in $T$. It follows that the set of elements $g\in G$ such that $g\cdot a=x$ for some $a\in A$ 
has diameter at most $1+2M_2$ in $(G,d_T)$. 
We define $\varphi:\mathcal V_X\longrightarrow G$ in the following way: to each $x\in \mathcal V_X$ we associate some $g$ in $G$ such that $g^{-1}\cdot x\in A$. It is now clear that $ \phi\circ\psi$ is at distance at most $2M_2+1$ from $Id_G$  and $\psi\circ\phi$ is at distance at most~$M_1$ from $Id_{\mathcal V_X}$.

We now show that $\phi\co(\mathcal V_X,d_X)\longrightarrow (G,d_T) $ is also Lipschitz. Let $x_1,x_2$ be adjacent vertices of $X$. There is some $g$ in $G$ and $1\leqslant i\leqslant m$ such that $\{g\cdot x_1,g\cdot x_2\}=\{u_i,v_i\}$
is an edge belonging to~$B$.
By construction, $\alpha_i^{-1}g\cdot x_1=\alpha_i^{-1}\cdot u_i\in A$ and $\beta_i^{-1}g\cdot x_2=\beta_i^{-1}\cdot v_i\in A$; it follows that 
$d_T(\phi(x_1),g^{-1}\alpha_i)\leqslant 2M_2+1$ and $d_T(\phi(x_2),g^{-1}\beta_i)\leqslant 2M_2+1$. 

We then have 
\begin{eqnarray*}
d_T(\phi(x_1),\phi(x_2)) & \leqslant & d_t(\phi(x_1),g^{-1}\alpha_i)+d_T(g^{-1}\alpha_i,g^{-1}\beta_i)+d_T(g^{-1}\beta_i,\phi(x_2))\\
 & \leqslant & 4M_2+2+d_T(\alpha_i,\beta_i)\\
  & \leqslant &  4M_2+2+2M_3.
 \end{eqnarray*}
 
In summary, if $x_1, x_2$ are arbitrary vertices of~$X$ then 
$$d_T(\phi(x_1),\phi(x_2))\leqslant 2(2M_2+M_3+1)\cdot d_X(x_1,x_2).\qedhere$$ 
\end{proof}


\section{Hyperbolic structures for $\mathcal B_{n+1}$}\label{S:BnHyp}

In this section, we review some Gromov-hyperbolic graphs on which Artin's braid group $\Bnpo$ acts. Lemma \ref{L:Main} is put to work in order to derive hyperbolic structures on $\Bnpo$ from these actions. These hyperbolic structures are interpreted in an algebraic context. 

\subsection{The curve graph}\label{Subsection:CurveGraph}

Fix an integer  $n\geqslant 3$. Artin's braid group on $n+1$ strands is identified with the mapping class group of a closed, $n+1$ times punctured disk $\Dnpo$ (with boundary fixed pointwise).
We consider the following model for $\Dnpo$: the disk of diameter $[0,n+2]$ in $\mathbb C$ with the standard orientation,
and $n+1$ punctures at $1,\ldots, n+1$ numbered accordingly $p_1,\ldots, p_{n+1}$. 
For $1\leqslant i\leqslant n$, write $s_i$ for the half-Dehn twist along an horizontal arc connecting $p_i$ and $p_{i+1}$. Then $(\Bnpo,\{s_1,\ldots, s_n\})$ is exactly the Artin-Tits system associated to the Coxeter graph~$A_n$. 

The \emph{curve graph} $\mathcal C(\Dnpo)$ of the $n+1$ times punctured disk $\Dnpo$ is the (locally-infinite) graph defined as follows: 
\begin{itemize} 
\item its vertices are isotopy classes of essential simple closed curves in $\Dnpo$, that is isotopy classes of simple closed curves in $\Dnpo$  surrounding at least 2 and at most $n$ punctures;
\item two distinct vertices are connected by an edge if and only if the corresponding isotopy classes can be realized disjointly.
\end{itemize}

According to \cite{MasurMinsky1}, this graph is connected and Gromov-hyperbolic; a simpler proof can be found in \cite{HPW}. 
The group $\mathcal B_{n+1}$ acts naturally by isometries (simplicial automorphisms) on $\mathcal C(\Dnpo)$.

An essential simple closed curve in $\Dnpo$ (a vertex of $\mathcal C(\Dnpo)$) is said to be \emph{round} or \emph{standard} if it is isotopic to a geometric circle. Note that there are exactly $\frac{n(n+1)}{2}-1$ such vertices: 
each of them can be described by two integers $1\leqslant i<j\leqslant n+1$, where $p_i$ ($p_j$ respectively) is the first (the last respectively) puncture surrounded; we denote this curve by $c_{ij}$. 
For $1\leqslant i<j\leqslant n+1$, we denote by $\mathcal B_{ij}$ the set of braids whose support is enclosed by $c_{ij}$; this is the standard irreducible parabolic subgroup generated by $\{s_t, i\leqslant t\leqslant j-1\}$, which is isomorphic to a braid group on $j-i+1$ strands. The square of the Garside element of this group $\Delta_{ij}^2$ is the Dehn twist around the curve $c_{ij}$.

\begin{lemma}\label{L:StructureXC}
Let $X_{\mathcal C}$ be the union of stabilizers of all standard curves. 
Then $\Bnpo$ equipped with the word metric $d_{X_{\mathcal C}}$ is quasi-isometric to the curve graph of the $n+1$ times punctured disk.
\end{lemma}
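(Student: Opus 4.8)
The plan is to apply Lemma~\ref{L:Main} directly to the action of $\Bnpo$ on $X=\mathcal C(\Dnpo)$, taking for $T$ the set $X_{\mathcal C}$, which is by definition $\bigcup_{c} Stab_{\Bnpo}(c)$ where $c$ ranges over the standard (round) curves. To invoke the lemma I must verify its three hypotheses. For hypothesis~(i), I would take $A$ to be the finite set of the $\tfrac{n(n+1)}{2}-1$ standard curves $c_{ij}$, and argue that every essential simple closed curve in $\Dnpo$ lies in the $\Bnpo$-orbit of some $c_{ij}$: indeed, a simple closed curve surrounding $k$ punctures (with $2\leqslant k\leqslant n$) can be carried by a homeomorphism of the punctured disk to the round curve $c_{1,k}$, and any such homeomorphism is realized by an element of $\Bnpo$ since $\Bnpo$ is the full mapping class group of $\Dnpo$ (boundary fixed). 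So the orbits are indexed by the number of enclosed punctures, and each orbit meets $A$.

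For hypothesis~(ii) I need a finite set $B$ of edges meeting every edge-orbit. An edge of $\mathcal C(\Dnpo)$ is an unordered pair of disjoint (non-isotopic) essential curves; by the change-of-coordinates principle for the punctured disk, the topological type of such a configuration is determined by finitely many combinatorial data (how many punctures each curve encloses and their nesting/side-by-side pattern). Hence there are finitely many $\Bnpo$-orbits of edges, and choosing one representative edge in each orbit gives the desired finite $B$. I would phrase this as: the action on ordered pairs of disjointly-realizable curves has finitely many orbits because such configurations are classified up to homeomorphism by finite combinatorial data, and again homeomorphisms are realized by braids.

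Hypothesis~(iii), that $T=X_{\mathcal C}$ generates $\Bnpo$, is immediate because the generators $s_1,\ldots,s_n$ already lie in $X_{\mathcal C}$: each half-twist $s_i$ fixes the standard curve $c_{i,i+1}$ (it is supported inside $\mathcal B_{i,i+1}$ and stabilizes its boundary), so $s_i\in Stab_{\Bnpo}(c_{i,i+1})\subset X_{\mathcal C}$. With (i)--(iii) in hand, Lemma~\ref{L:Main} yields that $(\Bnpo,d_{X_{\mathcal C}})$ is quasi-isometric to $(\mathcal C(\Dnpo),d_{\mathcal C})$, which is exactly the claim; hyperbolicity and connectedness of the curve graph are already recorded earlier.

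The main obstacle I anticipate is the rigorous verification of the finiteness in hypotheses~(i) and~(ii), i.e.\ that the $\Bnpo$-action on curves (resp.\ on edges) has finitely many orbits with representatives among the standard objects. The substance here is the change-of-coordinates principle: one must be careful that it is the \emph{braid group} (mapping classes fixing $\partial\Dnpo$ pointwise and permuting punctures), rather than the full mapping class group of a surface with marked points, that acts transitively on curves enclosing a fixed number of punctures. Since every orientation-preserving self-homeomorphism of $\Dnpo$ fixing the boundary is isotopic to a braid, this transitivity holds, but I would state it explicitly rather than leave it implicit. Once finiteness of orbits is granted, the rest is a routine application of the stand-alone Lemma~\ref{L:Main}.
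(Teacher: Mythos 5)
Your proposal is correct and follows essentially the same route as the paper: verify the three hypotheses of Lemma~\ref{L:Main} for the $\Bnpo$-action on $\mathcal C(\Dnpo)$ (standard curves as vertex-orbit representatives, finitely many edge orbits, and each $s_i$ fixing a standard curve) and then apply the lemma. The only cosmetic difference is in the justifications: where you invoke the change-of-coordinates principle directly for both vertex and edge orbits, the paper cites \cite[Section 1.3.1]{FarbMargalit} for vertices and the Garside-theoretic simultaneous-standardization result \cite[Proposition 4.4]{LeeLee} for edges; both are valid, since Lemma~\ref{L:Main} only needs some finite set of edge-orbit representatives, not ones with standard endpoints.
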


\begin{proof} 
First observe that the action of each generator $s_i$ fixes some standard curve; therefore, $X_{\mathcal C}$ generates $\mathcal B_{n+1}$. 
According to \cite[Section 1.3.1]{FarbMargalit}, the set $A$ 
of all standard curves contains a representative of each vertex orbit under the action of $\Bnpo$.
According to \cite[Proposition 4.4]{LeeLee}, the \emph{finite} set of edges in $\mathcal C (\Dnpo)$ both of whose extremities are standard curves contains a representative of each edge orbit under the action of $\Bnpo$. Hence we see that the isometric action of $\Bnpo$ on the curve graph $\mathcal C(\Dnpo)$ satisfies the hypotheses of Lemma~\ref{L:Main}.
\end{proof}

\begin{proposition}\label{P:Curves}
Let $X_{NP}^{\Bnpo}$ be the union of all normalizers of  proper irreducible standard parabolic subgroups of $\mathcal B_{n+1}$. 
Then $\Bnpo$ equipped with the word metric $d_{X_{NP}^{\Bnpo}}$ is quasi-isometric to the curve graph of the $n+1$ times punctured disk. 
In particular, $X_{NP}^{\Bnpo}$ is a hyperbolic structure on $\Bnpo$.
\end{proposition}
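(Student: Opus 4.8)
The plan is to show that $X_{NP}^{\Bnpo}$ is equivalent, as a generating set, to the set $X_{\mathcal C}$ from Lemma \ref{L:StructureXC}, so that the quasi-isometry to the curve graph follows immediately. The first step is to identify the stabilizer of each standard curve $c_{ij}$ with the normalizer of an associated parabolic subgroup. Since the Dehn twist $\Delta_{ij}^2$ around $c_{ij}$ generates the center of $\mathcal B_{ij}$ (so $\Delta_{ij}^2 = \Omega_{ij}$ up to the standard convention), a braid fixes the isotopy class of $c_{ij}$ exactly when it conjugates the parabolic subgroup $\mathcal B_{ij}$ to itself, i.e.\ lies in $N_{\Bnpo}(\mathcal B_{ij})$. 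By Proposition \ref{P:Paris}, this normalizer equals $Z_{\Bnpo}(\Omega_{ij})$, which in mapping-class-group terms is exactly the centralizer of the Dehn twist, hence the stabilizer of $c_{ij}$. So each generator-set summand coincides: $Stab(c_{ij}) = N_{\Bnpo}(\mathcal B_{ij})$.

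The second step is to check that ranging over all standard curves and ranging over all proper irreducible standard parabolic subgroups produce matching collections. The proper irreducible standard parabolic subgroups of $\mathcal B_{n+1}$ are precisely the $\mathcal B_{ij}$ with $1 \leqslant i < j \leqslant n+1$ and $(i,j) \neq (1,n+1)$, since the connected subgraphs of the linear Coxeter graph $A_n$ are exactly the intervals $\{s_i, \ldots, s_{j-1}\}$; these correspond bijectively to the standard curves $c_{ij}$ surrounding between $2$ and $n$ punctures. Therefore $X_{NP}^{\Bnpo} = \bigcup_{(i,j)} N_{\Bnpo}(\mathcal B_{ij}) = \bigcup_{(i,j)} Stab(c_{ij}) = X_{\mathcal C}$, as sets. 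Given this identification, the conclusion is immediate: Lemma \ref{L:StructureXC} gives that $(\Bnpo, d_{X_{\mathcal C}})$ is quasi-isometric to $\mathcal C(\Dnpo)$, and since the graph is Gromov-hyperbolic (as recalled above, following \cite{MasurMinsky1, HPW}), hyperbolicity is a quasi-isometry invariant, so $X_{NP}^{\Bnpo}$ is a hyperbolic structure.

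The main conceptual point to get right is the first step: the precise equality \emph{stabilizer of a standard curve $=$ normalizer of the associated parabolic subgroup}. The inclusion $N(\mathcal B_{ij}) \subseteq Stab(c_{ij})$ is clear, since conjugating the subgroup carries its central Dehn twist to itself and hence preserves the twisting curve up to isotopy. The reverse inclusion is the substantive direction, and it is exactly what Proposition \ref{P:Paris} supplies, interpreted geometrically: an element fixing $c_{ij}$ centralizes the Dehn twist $\Omega_{ij}$ and therefore normalizes $\mathcal B_{ij}$. I expect this translation between the topological description (Dehn twists, curve stabilizers) and the algebraic description (central elements, normalizers) to be the only delicate part; once it is in place, the argument is a purely formal comparison of two unions followed by an appeal to Lemma \ref{L:StructureXC}.
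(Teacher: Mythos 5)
Your proposal follows essentially the same route as the paper's own proof: identify the stabilizer of each standard curve $c_{ij}$ with the normalizer of the parabolic subgroup $\mathcal B_{ij}$ by combining the fact that curve stabilizers are centralizers of Dehn twists (\cite[Fact 3.8]{FarbMargalit}) with Proposition~\ref{P:Paris}, and then invoke Lemma~\ref{L:StructureXC}. Your bookkeeping of the bijection between standard curves and proper irreducible standard parabolic subgroups is correct.

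However, one step as written is wrong and leaves a gap in the rank-one case. You assert that the Dehn twist $\Delta_{ij}^2$ around $c_{ij}$ generates the center of $\mathcal B_{ij}$, i.e.\ that $\Delta_{ij}^2=\Omega_{ij}$. This is false when $j=i+1$: there $\mathcal B_{ij}=\langle s_i\rangle$ is of type $A_1$, its center is all of $\langle s_i\rangle$, and $\Omega_{ij}=\Delta_{ij}=s_i$, while the Dehn twist about $c_{i,i+1}$ is $s_i^2$. Consequently, for these curves Proposition~\ref{P:Paris} gives $N_{\Bnpo}(\langle s_i\rangle)=Z_{\Bnpo}(s_i)$, whereas the curve stabilizer is $Z_{\Bnpo}(s_i^2)$, and your chain of identifications needs the additional fact $Z_{\Bnpo}(s_i^2)=Z_{\Bnpo}(s_i)$; this does not follow from the two cited results alone. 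It is true -- for instance, any $g$ stabilizing $c_{i,i+1}$ conjugates $s_i$ to a mapping class supported in the twice-punctured disk bounded by that curve, hence to a power $s_i^k$, and $k=1$ by comparing images under the length homomorphism $\ell$ -- but it must be argued. This is precisely the point the paper covers with its parenthetical remark that the centralizer of $\Delta_{ij}^2$ ``is the same as the centralizer of $s_i$ if $j=i+1$''; your write-up instead papers over it with the incorrect identity $\Delta_{ij}^2=\Omega_{ij}$. Note that the case matters: curves surrounding exactly two punctures are genuine vertices of $\mathcal C(\Dnpo)$, and the subgroups $\langle s_i\rangle$ are genuine members of the union defining $X_{NP}^{\Bnpo}$, so they cannot be excluded from the comparison of the two generating sets.
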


\begin{proof} According to \cite[Fact 3.8]{FarbMargalit}, the stabilizer of any curve $c$ is exactly the centralizer of the Dehn twist around $c$; 
if $c=c_{ij}$ is standard, this is the centralizer of the square of the Garside element $\Delta_{ij}$ of $\mathcal B_{ij}$ (this is the same as the centralizer of $s_i$ if $j=i+1$). In other words, in view of Proposition~\ref{P:Paris}, $Stab_{\Bnpo}(c_{ij})$ is the same as the normalizer of the standard parabolic subgroup~$\mathcal B_{ij}$. 
\end{proof}

%
%
%
%

\subsection{The boundary connecting arc graph} 

Again, we fix an integer $n\geqslant 3$; we keep the same model for the punctured disk $\Dnpo$ and $s_i$, $1\leqslant i\leqslant n$, are the usual Artin generators of $\Bnpo$. 

The \emph{boundary connecting arc graph} $\mathcal A_{\partial}(\Dnpo)$ of the $n+1$ times punctured disk $\Dnpo$ is the (locally-infinite) graph defined as follows: 

\begin{itemize} 
\item its vertices are isotopy classes of essential (i.e. not homotopic into a subset of $\partial \Dnpo$) simple arcs in $\mathcal D_{n+1}$ both of whose endpoints lie in the boundary $\partial \mathcal D_{n+1}$ and where in addition the endpoints are allowed to slide freely along the boundary during the isotopy.
\item two distinct vertices are connected by an edge if and only if the corresponding isotopy classes of arcs can be realized disjointly. 
\end{itemize}

The argument given in \cite{HPW} can be easily adapted to show that $\mathcal A_{\partial}(\Dnpo)$ is a Gromov-hyperbolic connected graph. The group $\Bnpo$ acts by isometries on $\mathcal A_{\partial}(\Dnpo)$. 

Given $1\leqslant i\leqslant n$, let $a_i$ be the arc consisting simply of the vertical line with real part $i+\frac{1}{2}$; let 
$A=\{a_i, 1\leqslant i \leqslant n\}$.
Then $A$ exhausts all
the orbits of vertices of $\mathcal A_{\partial}(\Dnpo)$ under the action of $\Bnpo$ (the orbit of a boundary connecting arc~$a$ is uniquely characterized by the number of punctures in any of the two connected components of $\Dnpo\setminus a$). Two different disjoint boundary connecting arcs (forming an edge of $\mathcal A_{\partial}(\Dnpo)$) can be transformed simultaneously into two arcs $a_i,a_j$ for some $1\leqslant i\neq j\leqslant n$ by the action of a braid: this says that there is a finite system $B$ of representatives of the orbits of edges. Finally, each half-twist $s_i$ stabilizes some arc in~$A$. (Indeed, $s_i$ fixes $a_{i-1}$ if $i\geqslant 2$, and $s_1$ fixes $a_2$.)
Therefore, all three hypotheses of Lemma \ref{L:Main} are satisfied. 

It follows from Lemma \ref{L:Main} that the union $X_{\mathcal A_{\partial}}$ of stabilizers of arcs $a_i, \ 1\leqslant i \leqslant n$, is a hyperbolic structure on $\Bnpo$ and that $(\mathcal B_{n+1}, d_{X_{\mathcal A_{\partial}}})$ is quasi-isometric to $\mathcal A_{\partial}(\Dnpo)$.


{\bf {Claim.}} 
If $n$ is odd and $i=\frac{n+1}{2}$, the stabilizer of $a_i$ is generated by $\Delta$ and the direct product of standard parabolic subgroups $\langle s_1,\ldots,s_{i-1}\rangle\times \langle s_{i+1}\ldots s_n\rangle$. 
In any other case, the stabilizer of $a_i$ is generated by $\Delta^2$ and the direct product of standard parabolic subgroups $\langle s_1,\ldots,s_{i-1}\rangle\times \langle s_{i+1}\ldots s_n\rangle$.

\begin{proof}
The arc $a_i$ cuts $\mathcal D_{n+1}$ into two connected components which are punctured disks with $i$ and $n+1-i$ punctures respectively. Except if $2i=n+1$, an element $g$ in the stabilizer of $a_i$ must preserve the boundary of each of these two disks. Following the conventions in \cite{LeeLee}, this means that $g$ can be written as a product of an interior braid (which is an element of $\langle s_1,\ldots,s_{i-1}\rangle\times \langle s_{i+1}\ldots s_n\rangle$) and a tubular braid of the form 
$$\left((s_i\ldots s_1)(s_{i+1}\ldots s_2)\ldots (s_n\ldots s_{n-i+1})(s_{n-i+1}\ldots s_1)\ldots (s_n\ldots s_{i+1})\right)^k$$
 for some $k\in \mathbb Z$ (a pure braid on 2 fat strands).  We observe however that the above braid can be written as 
 $$\Delta^{2k}\Delta^{-2k}_{\langle s_1,\ldots,s_{i-1}\rangle}\Delta^{-2k}_{\langle s_{i+1}\ldots s_{n}\rangle}.$$
 If $2i=n+1$, an element of the stabilizer of $a_i$ can be written as the product of an interior braid as above and a tubular braid (on 2 fat strands) which needs not be pure, that is a power of 
 $$(s_i\ldots s_1)(s_{i+1}\ldots s_2)\ldots (s_n\ldots s_{n-i+1}),$$
which we can rewrite as 
$$\Delta\Delta^{-1}_{\langle s_1,\ldots,s_{i-1}\rangle}\Delta^{-1}_{\langle s_{i+1}\ldots s_{n}\rangle}.\qedhere$$
\end{proof}


\begin{proposition}\label{P:PartialA} Let $X_{P}^{\Bnpo}$ be the subset of $\Bnpo$ 
consisting of the union of all proper irreducible standard parabolic subgroups together with the center $\langle \Delta^2\rangle$.  Then $\Bnpo$ equipped with the word metric $d_{X_{P}^{\Bnpo}}$ is quasi-isometric to $\mathcal A_{\partial}(\Dnpo)$. In particular, $X_{P}^{\Bnpo}$ is a hyperbolic structure on $\Bnpo$.
\end{proposition}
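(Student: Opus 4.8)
The plan is to show that $X_{P}^{\Bnpo}$ is \emph{equivalent} to $X_{\mathcal A_{\partial}}$, the union of the stabilizers of the arcs $a_1,\dots,a_n$. We have just seen, via Lemma~\ref{L:Main}, that $(\Bnpo,d_{X_{\mathcal A_{\partial}}})$ is quasi-isometric to the Gromov-hyperbolic graph $\mathcal A_{\partial}(\Dnpo)$; since a bilipschitz equivalence is a quasi-isometry and quasi-isometries compose, such an equivalence will immediately give that $(\Bnpo,d_{X_{P}^{\Bnpo}})$ is quasi-isometric to $\mathcal A_{\partial}(\Dnpo)$ as well, whence $X_{P}^{\Bnpo}$ is a hyperbolic structure. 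It therefore suffices to establish the two inequalities $X_{\mathcal A_{\partial}}\preccurlyeq X_{P}^{\Bnpo}$ and $X_{P}^{\Bnpo}\preccurlyeq X_{\mathcal A_{\partial}}$; the crucial ingredient for both is the explicit description of the arc stabilizers in the Claim above. (Note that both sets generate $\Bnpo$: $X_{\mathcal A_{\partial}}$ does so by the discussion preceding the Claim, and $X_{P}^{\Bnpo}$ does so because each $s_i$ lies in the proper parabolic $\langle s_i\rangle$.)

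For $X_{\mathcal A_{\partial}}\preccurlyeq X_{P}^{\Bnpo}$ I would bound $d_{X_{\mathcal A_{\partial}}}(1,y)$ uniformly over $y\in X_{P}^{\Bnpo}$. On the one hand, $\langle\Delta^2\rangle$ is contained in $Stab(a_i)$ for every $i$ (the Claim lists $\Delta^2$ among the generators of each stabilizer), so any power of $\Delta^2$ has $X_{\mathcal A_{\partial}}$-length at most $1$. On the other hand, let $\langle s_p,\dots,s_q\rangle$ be a proper irreducible standard parabolic subgroup. Since it is proper there is an index $i\in\{1,\dots,n\}\setminus\{p,\dots,q\}$, and as $\{p,\dots,q\}$ is a set of consecutive integers avoiding $i$ it lies entirely to one side of $i$; hence $\langle s_p,\dots,s_q\rangle\subseteq \langle s_1,\dots,s_{i-1}\rangle\times\langle s_{i+1},\dots,s_n\rangle\subseteq Stab(a_i)$ by the Claim. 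So every element of a proper irreducible standard parabolic subgroup also has $X_{\mathcal A_{\partial}}$-length at most $1$. As $X_{P}^{\Bnpo}$ is the union of these subgroups and $\langle\Delta^2\rangle$, we obtain $\sup_{y\in X_{P}^{\Bnpo}}d_{X_{\mathcal A_{\partial}}}(1,y)\leqslant 1$.

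For the reverse inequality $X_{P}^{\Bnpo}\preccurlyeq X_{\mathcal A_{\partial}}$ I would invoke the decomposition supplied by the Claim: every $g\in Stab(a_i)$ may be written $g=\Delta^{\varepsilon k}\,w_1 w_2$ with $\varepsilon\in\{1,2\}$, $k\in\mathbb Z$, $w_1\in\langle s_1,\dots,s_{i-1}\rangle$ and $w_2\in\langle s_{i+1},\dots,s_n\rangle$ (the Garside factors can be collected at the front because $\Delta^{\varepsilon}$ normalizes the displayed direct product). The factors $w_1$ and $w_2$ belong to proper irreducible standard parabolic subgroups, hence to $X_{P}^{\Bnpo}$, contributing $X_{P}^{\Bnpo}$-length at most $2$. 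Whenever the Garside factor is an even power of $\Delta$---that is, always when $\varepsilon=2$, and when $\varepsilon=1$ with $k$ even---it lies in $\langle\Delta^2\rangle\subseteq X_{P}^{\Bnpo}$ and contributes length at most $1$, so that $d_{X_{P}^{\Bnpo}}(1,g)\leqslant 3$.

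The single delicate point is the middle-arc case ($n$ odd, $i=\tfrac{n+1}{2}$) in which $g$ may carry an odd power of $\Delta$, and $\Delta\notin X_{P}^{\Bnpo}$. Here I would write $\Delta^{k}=\Delta\cdot\Delta^{k-1}$ with $\Delta^{k-1}\in\langle\Delta^2\rangle$, giving $d_{X_{P}^{\Bnpo}}(1,g)\leqslant d_{X_{P}^{\Bnpo}}(1,\Delta)+3$; since $X_{P}^{\Bnpo}$ generates the fixed group $\Bnpo$, the quantity $d_{X_{P}^{\Bnpo}}(1,\Delta)$ is a finite constant. As there are only finitely many arc-orbit representatives $a_1,\dots,a_n$, the resulting bound is uniform, which yields $X_{P}^{\Bnpo}\preccurlyeq X_{\mathcal A_{\partial}}$ and completes the proof of the equivalence.
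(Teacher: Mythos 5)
Your proof is correct and follows essentially the same route as the paper: both establish the equivalence of $X_{P}^{\Bnpo}$ with the union of arc stabilizers $X_{\mathcal A_{\partial}}$ via the Claim's description of those stabilizers, and then conclude from the quasi-isometry of $(\Bnpo,d_{X_{\mathcal A_{\partial}}})$ with $\mathcal A_{\partial}(\Dnpo)$ provided by Lemma~\ref{L:Main}. The only cosmetic difference is that for the odd middle-arc case the paper makes the constant explicit (at most $6$) by observing that $\Delta$ is a product of three elements of proper irreducible standard parabolic subgroups, whereas you simply invoke finiteness of $d_{X_{P}^{\Bnpo}}(1,\Delta)$, which is equally valid.
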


\begin{proof}
Let $X_{\mathcal A_{\partial}}$ be the union of stabilizers of all arcs $a_i$, $1\leqslant i\leqslant n$, as above. Then $X_{\mathcal A_{\partial}}$ contains the generating set $X_{P}^{\Bnpo}$ (every element in a proper irreducible standard parabolic subgroup fixes some arc $a_i$).
On the other hand, each element in $X_{\mathcal A_{\partial}}$ can be written as a product of at most 6 elements of $X_P^{\Bnpo}$, according to the above claim (observe that $\Delta$ can be decomposed as a product of three elements of proper irreducible standard parabolic subgroups). 
This shows that both generating sets are equivalent. The remaining part of the statement follows as $X_{\mathcal A_{\partial}}$ is a hyperbolic structure and $(\mathcal B_{n+1}, d_{X_{\mathcal A_{\partial}}})$ is quasi-isometric to $\mathcal A_{\partial}(\Dnpo)$.
\end{proof}


\section{Generalizing to Artin-Tits groups of spherical type}\label{S:ATHyp}

Throughout this section, $A=A_S$ is some fixed non-cyclic irreducible Artin-Tits group of spherical type. 

\subsection{Definitions}

\subsubsection{Generalized boundary connecting arc graph}

Let $X_P^A$ be the union of all proper irreducible standard parabolic subgroups of $A$ and the cyclic subgroup generated by the square of $\Delta_S$. It is clear that $X_P^A$ generates $A$ because each generator in $S$ belongs to some proper irreducible standard parabolic subgroup. 

\begin{proposition} Suppose that one of the following holds. 
\begin{itemize}
\item[(i)] $A$ is of dihedral type, or
\item[(ii)] $A$ is of type $A_n$ ($n\geqslant 3$). 
\end{itemize}
Then $X_P^A$ is a hyperbolic structure on $A$. 
\end{proposition}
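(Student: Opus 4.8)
The plan is to treat the two cases (i) and (ii) by quite different methods, since case (i) is genuinely about dihedral groups and case (ii) can be reduced to the braid group results already established in Section~\ref{S:BnHyp}.

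\textbf{Case (ii), type $A_n$ with $n\geqslant 3$.} Here I would simply invoke the work already done. Under the identification of the Artin-Tits group of type $A_n$ with $\mathcal B_{n+1}$, the set $X_P^A$ coincides \emph{verbatim} with the set $X_P^{\Bnpo}$ of Proposition~\ref{P:PartialA}: both are the union of all proper irreducible standard parabolic subgroups together with $\langle\Delta^2\rangle$ (note $\Omega_S=\Delta_S^2$ in type $A_n$ for $n\geqslant 2$, so ``square of $\Delta_S$'' is the correct central element). Proposition~\ref{P:PartialA} asserts precisely that $X_P^{\Bnpo}$ is a hyperbolic structure on $\Bnpo$, by showing $(\Bnpo,d_{X_P^{\Bnpo}})$ is quasi-isometric to the Gromov-hyperbolic graph $\mathcal A_{\partial}(\Dnpo)$. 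So for $n\geqslant 3$ there is essentially nothing new to prove; the statement is a restatement of Proposition~\ref{P:PartialA}.

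\textbf{Case (i), dihedral type.} Now $A$ is of type $I_{2m}$ for some $m\geqslant 3$, with two generators $s,t$, so the only proper irreducible standard parabolic subgroups are the two infinite cyclic groups $\langle s\rangle$ and $\langle t\rangle$. Thus $X_P^A=\langle s\rangle\cup\langle t\rangle\cup\langle\Delta_S^2\rangle$. The plan is to apply Corollary~\ref{C:Dihedral}: I would identify a \emph{finite} generating set $X$ of $A$ such that $X\cup\langle\Delta^2\rangle$ is equivalent to $X_P^A$, and then Corollary~\ref{C:Dihedral} delivers hyperbolicity. The natural candidate is $X=\{s,t\}$ (or any finite subset containing these). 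To compare $X\cup\langle\Delta^2\rangle$ with $X_P^A=\langle s\rangle\cup\langle t\rangle\cup\langle\Delta^2\rangle$, observe that the inclusion $X\cup\langle\Delta^2\rangle\subseteq X_P^A$ is immediate, giving one Lipschitz direction. For the reverse, every element $s^k\in\langle s\rangle$ has $d_X$-distance $1$ from the identity (it is a single generator of $X_P^A$), and likewise for $\langle t\rangle$ and $\langle\Delta^2\rangle$; conversely each such element is a power of a single element of $X$, but that power is not $X$-bounded. Hence I must be slightly more careful: the correct statement is that $d_{X\cup\langle\Delta^2\rangle}(1,y)\leqslant 1$ for every $y\in X_P^A$, since $X_P^A=\langle s\rangle\cup\langle t\rangle\cup\langle\Delta^2\rangle$ and each of the three cyclic pieces is, as a \emph{set}, contained in the union $\langle s\rangle\cup\langle t\rangle\cup\langle\Delta^2\rangle$ which is exactly $X_P^A$ itself. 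So in fact $X_P^A$ is already \emph{of the form} $X'\cup\langle\Delta^2\rangle$ with $X'=\langle s\rangle\cup\langle t\rangle$ an infinite generating set, and the cleanest route is to show $X_P^A$ is equivalent to $\{s,t\}\cup\langle\Delta^2\rangle$ and apply Corollary~\ref{C:Dihedral}.

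\textbf{The main obstacle.} The genuinely nontrivial point, and where I expect to spend real effort, is the equivalence $X_P^A\preccurlyeq\{s,t\}\cup\langle\Delta^2\rangle$ in the dihedral case — that is, bounding $d_{\{s,t\}\cup\langle\Delta^2\rangle}(1,s^k)$ uniformly in $k$. A single generator $s$ has $\langle s\rangle$-norm $1$ in $X_P^A$ but a priori large $\{s,t\}$-norm. The resolution must use the central element: I would show that for each $k$ the power $s^k$ can be rewritten, modulo a bounded number of multiplications by elements of $\langle\Delta^2\rangle$, as a word of bounded length in $s,t$. Concretely, in $A/\langle\Delta^2\rangle$ (a virtually free group by Proposition~\ref{P:PureFree}) the images of $s$ and $t$ have finite order, so $\langle s\rangle$ and $\langle t\rangle$ map to \emph{finite} cyclic subgroups; therefore every $s^k$ equals a bounded-length word in $s,t$ times a power of $\Delta^2$, giving the desired uniform bound. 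Once this equivalence is secured, Corollary~\ref{C:Dihedral} (applied with $X=\{s,t\}$) shows $\{s,t\}\cup\langle\Delta^2\rangle$ is a hyperbolic structure, and equivalence of generating sets preserves the Gromov-hyperbolicity of the associated Cayley graph, completing case~(i).
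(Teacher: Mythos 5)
Your treatment of case (ii) is correct and is exactly what the paper does: for type $A_n$ ($n\geqslant 3$) the statement is just a restatement of Proposition~\ref{P:PartialA}. The problem is case (i), where your argument rests on a false claim: the images of $s$ and $t$ in $A/\langle\Delta^2\rangle$ do \emph{not} have finite order. Take $A$ of type $I_6=A_2$, i.e., $A=\langle s,t\mid sts=tst\rangle$. Setting $x=\Delta=sts$ and $y=st$, one has $A\cong\langle x,y\mid x^2=y^3\rangle$ and $A/\langle\Delta^2\rangle\cong \mathbb{Z}/2\ast\mathbb{Z}/3\cong PSL(2,\mathbb{Z})$; the image of $s=y^{-1}x$ is $\bar y^{-1}\bar x$, a cyclically reduced word of syllable length two in a free product, hence an element of \emph{infinite} order (a parabolic element of $PSL(2,\mathbb{Z})$). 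The same happens for every $I_{2m}$: the generators survive as infinite-order elements of the virtually free quotient, and their powers are unbounded -- indeed quasi-geodesic -- in $\Gamma(A,S\cup\langle\Delta^2\rangle)$, as the paper itself notes in the proof of Proposition~\ref{P:Dihedral}.

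Consequently the equivalence your argument needs, $X_P^A\sim\{s,t\}\cup\langle\Delta^2\rangle$, is not merely unproved but false: $d_{X_P^A}(1,s^k)=1$ for all $k$, while $d_{\{s,t\}\cup\langle\Delta^2\rangle}(1,s^k)\to\infty$ as $k\to\infty$. This non-equivalence is precisely the content of the \emph{strict} middle inequality $X_P^A \preccurlyeq X_{abs}^A\sim S\cup\langle\Delta^2\rangle$ in Proposition~\ref{P:Dihedral}. Moreover, since Corollary~\ref{C:Dihedral} makes all structures of the form $X\cup\langle\Delta^2\rangle$ with $X$ finite equivalent to one another, no choice of finite $X$ can repair the reduction: $X_P^A$ is equivalent to none of them. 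The paper's proof of case (i) therefore proceeds quite differently: it establishes hyperbolicity of $\Gamma(A,X_P^A)$ directly via the guessing geodesics criterion of \cite[Theorem 3.15]{MS}, taking as candidate paths the projections to $\Gamma(A,X_P^A)$ of geodesics of the hyperbolic graph $\Gamma(A,S\cup\langle\Delta^2\rangle)$; thinness of the candidate triangles is inherited through the Lipschitz identity map, and for endpoints $g,h$ with $g^{-1}h=a^n$ (i.e., at $d_{X_P^A}$-distance $1$) one uses that the geodesic $g, ga, ga^2,\ldots,h$ in $\Gamma(A,S\cup\langle\Delta^2\rangle)$ is unique, so its projection has diameter $1$. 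None of this collapsing-to-a-known-structure strategy can be salvaged; a genuinely new hyperbolicity argument is required for $X_P^A$ in the dihedral case.
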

\begin{proof}
We refer to Proposition \ref{P:Dihedral} for the proof of~(i). For (ii), this is Proposition \ref{P:PartialA}: in this case, the Cayley graph $\Gamma(A, X_P^A)$ is quasi-isometric to the graph $\mathcal A_{\partial}(\mathcal D_{n+1})$. 
\end{proof}

\begin{conjecture}
Let $A$ be any non-cyclic irreducible Artin-Tits group of spherical type. The set $ X_P^A$ is a hyperbolic structure on $A$. 
\end{conjecture}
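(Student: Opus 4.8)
The plan is to establish the conjecture along the same lines as the proof for type $A_n$ (Proposition \ref{P:PartialA}), by finding a connected Gromov-hyperbolic graph on which $A$ acts cocompactly and whose vertex stabilizers are generated (up to bounded products) by proper irreducible standard parabolic subgroups and $\langle\Omega_S\rangle$. First, I would seek a combinatorial graph $\mathcal G_A$ playing the role of the boundary connecting arc graph $\mathcal A_\partial(\mathcal D_{n+1})$: its vertices should be the maximal proper irreducible parabolic subgroups of $A$ (or rather a suitable conjugacy-invariant family of ``maximal irreducible parabolics''), with two vertices joined by an edge when the corresponding parabolics satisfy a commensurating or ``disjointness'' relation analogous to disjoint arcs — for instance, when their $\Omega$-elements commute, so that by Proposition \ref{P:SimultStandard} they can be simultaneously standardized. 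The group $A$ acts on $\mathcal G_A$ by conjugation, and the stabilizer of a standard maximal irreducible parabolic $A_T$ is its normalizer $N_A(A_T) = Z_A(\Omega_T)$ by Proposition \ref{P:Paris}.

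Second, I would verify the three hypotheses of Lemma \ref{L:Main} for this action. Coboundedness on vertices and edges should follow from the finiteness of $S$: there are finitely many standard maximal irreducible parabolic subgroups, and finitely many standard pairs (again using Proposition \ref{P:SimultStandard} to reduce an arbitrary commuting pair to a standard pair). The generation hypothesis (iii) would require showing that the union $T$ of the stabilizers $Z_A(\Omega_T)$ generates $A$, which should be straightforward since each generator $s\in S$ lies in many proper standard parabolics. Third, and most importantly, I would need to relate $X_P^A$ to this generating set $T$ up to $\preccurlyeq$-equivalence: the inclusion $X_P^A \preccurlyeq T$ is easy, since each proper irreducible standard parabolic subgroup fixes some vertex; the reverse requires a \emph{structural decomposition} of each stabilizer $Z_A(\Omega_T)$ as a bounded-length product of elements of $X_P^A$. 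For type $A_n$ this was the content of the Claim, where a stabilizer decomposed as $\Delta^{\pm}$ (itself a product of three parabolic elements) times a direct product of two standard parabolics; I would need a uniform analogue of this tubular-braid computation across all Coxeter types.

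The hard part will be precisely this last structural step: establishing that the centralizer $Z_A(\Omega_T)$ of the central element of a maximal standard parabolic is generated, modulo bounded-length products, by proper irreducible standard parabolic subgroups together with $\langle\Omega_S\rangle$. Unlike the braid case, there is no uniform geometric model (punctured disk, tubular braids) available for the exceptional types $E_6,E_7,E_8,F_4,H_3,H_4$ and for $B_n,D_n$, so this decomposition would have to be derived purely algebraically from the Garside structure and the theory of parabolic subgroups developed in \cite{CGGMW}. A promising route is to show that $Z_A(\Omega_T)$ splits, up to finite index or bounded distortion, as a product of the parabolic $A_T$ itself, the parabolic ``complementary'' to $T$, and the cyclic group $\langle\Omega_S\rangle$ carrying the twisting; but even identifying the right notion of ``complementary parabolic'' and controlling how $\Omega_S$ interacts with it is delicate outside type $A$.

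Finally, granting the above quasi-isometry $(A, d_{X_P^A}) \simeq \mathcal G_A$, the hyperbolicity of $X_P^A$ reduces to proving that $\mathcal G_A$ is Gromov-hyperbolic. I expect one could hope to run a guessing-geodesics criterion in the spirit of \cite{PS,HPW}, exploiting the hierarchy of parabolic subgroups to build unicorn-type paths between maximal parabolics; however, since the analogous hyperbolicity for $\mathcal C_{parab}(A)$ (the closely related graph of \cite{CGGMW}) is itself only conjectured in general, this hyperbolicity input is the reason the statement remains a conjecture rather than a theorem, and I would not expect to close it with the tools stated in the excerpt alone.
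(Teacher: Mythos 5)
The statement you are addressing is one of the paper's \emph{conjectures}: the paper gives no proof of it, establishing $X_P^A$ as a hyperbolic structure only for dihedral type (Proposition~\ref{P:Dihedral}) and for type $A_n$ (Proposition~\ref{P:PartialA}), and your proposal honestly stops short of a proof as well. So the comparison can only bear on whether your plan points in a workable direction, and there it contains a genuine flaw, not just the acknowledged missing hyperbolicity input.

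The graph $\mathcal G_A$ you propose --- vertices the (maximal) proper irreducible parabolic subgroups, edges when $\Omega_P$ and $\Omega_Q$ commute, with $A$ acting by conjugation --- is not an analogue of the boundary-connecting arc graph $\mathcal A_{\partial}(\Dnpo)$; it is essentially the graph $\mathcal C_{parab}(A)$ of \cite{CGGMW}, i.e.\ the analogue of the \emph{curve} graph. Its vertex stabilizers are the normalizers $N_A(P)=Z_A(\Omega_P)$ (Proposition~\ref{P:Paris}), so Lemma~\ref{L:Main} applied to this action produces the generating set $X_{NP}^A$, not $X_P^A$. Consequently your key ``structural step'' --- writing every element of $Z_A(\Omega_T)$ as a bounded product of elements of $X_P^A$ --- would prove $X_P^A$ and $X_{NP}^A$ equivalent, which is exactly what Conjecture~\ref{C:StrictInequalities}(ii) denies, and what the paper \emph{disproves} for braid groups on at least $4$ strands: the braid $\hat\beta$ obtained by doubling a strand of a pseudo-Anosov braid lies in the stabilizer of a round curve, hence at $d_{X_{NP}}$-distance $1$ from the identity, yet it acts loxodromically on $\mathcal A_{\partial}(\Dnpo)$, so $d_{X_P^{\Bnpo}}(1,\hat\beta^k)\to\infty$ by Proposition~\ref{P:PartialA}. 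For the same reason, your ``promising route'' of splitting $Z_A(\Omega_T)$ as roughly $A_T\times(\text{complementary parabolic})\times\langle\Omega_S\rangle$ is already false in type $A_n$: the centralizer of the Dehn twist $\Delta_{12}^2$ contains the whole mapping class group of the exterior subsurface (a braid group on $n$ strands, one of them ``fat''), which is not boundedly generated by standard parabolics and the center. What the Claim in Section~\ref{S:BnHyp} actually computes is the stabilizer of an \emph{arc}, equivalently the joint stabilizer of a pair of complementary parabolics $\langle s_1,\ldots,s_{i-1}\rangle$ and $\langle s_{i+1},\ldots,s_n\rangle$; such joint stabilizers are genuinely smaller than normalizers of single parabolics, and the correct vertex set generalizing $\mathcal A_{\partial}$ must encode these complementary decompositions --- inventing what ``complementary'' means outside type $A_n$ is the real missing idea. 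A further, smaller defect: restricting to \emph{maximal} irreducible parabolics makes your edge relation empty in type $A_n$ (two distinct curves each enclosing $n$ of the $n+1$ punctures are never disjoint), so $\mathcal G_A$ would not even be connected.
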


\subsubsection{Generalized curve graph}

Let $X_{NP}^A$ be the union of the normalizers of all proper irreducible standard parabolic subgroups of $A$. 
Of course, $X_{NP}^A$ generates $A$, as each $s\in S$ normalizes any parabolic subgroup containing it and hence belongs to $X_{NP}^A$. Note also that any power of $\Delta_S^2$ normalizes every parabolic subgroup. 

Recall from \cite{CGGMW} the definition of the \emph{graph of parabolic subgroups} of $A$, denoted $\mathcal C_{parab}(A)$:
\begin{itemize}
\item its vertices are proper irreducible parabolic subgroups of $A$,
\item two distinct vertices $P,Q$ are connected by an edge if 
the minimal central elements $\Omega_P$ and $\Omega_Q$ commute.
\end{itemize}

The group $A$ acts naturally (by conjugation on vertices) by isometries on $\mathcal C_{parab}(A)$. Observing that the set of \emph{standard} parabolic subgroups is finite and using Proposition~\ref{P:SimultStandard}, Lemma~\ref{L:Main}
implies along the same lines as 
Lemma~\ref{L:StructureXC}:

\begin{proposition}
 The graphs $\Gamma(A,X_{NP}^A)$ and $\mathcal C_{parab}$ are quasi-isometric. 
\end{proposition}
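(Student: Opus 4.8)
The plan is to apply Lemma \ref{L:Main} to the action of the group $A$ by conjugation on the graph $\mathcal{C}_{parab}(A)$, mirroring the proof of Lemma \ref{L:StructureXC}. Since connectedness is a standing hypothesis of Lemma \ref{L:Main}, I would first record that $\mathcal{C}_{parab}(A)$ is connected, citing \cite{CGGMW}. Next I would check that the conjugation action is by graph isometries: conjugation by a fixed $g$ permutes the proper irreducible parabolic subgroups, and the equivariance $\Omega_{gPg^{-1}} = g\Omega_P g^{-1}$ (which follows from the definition of $\Omega_P$ recalled in Section \ref{SS:PrelimArtin}) shows that $\Omega_P$ and $\Omega_Q$ commute if and only if $\Omega_{gPg^{-1}}$ and $\Omega_{gQg^{-1}}$ commute; hence adjacency is preserved.

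I would then verify the three hypotheses of Lemma \ref{L:Main}. As the finite set of vertex-orbit representatives I take the collection of proper irreducible \emph{standard} parabolic subgroups $A_T$ with $T\subsetneq S$; this is finite because $S$ is finite, and it meets every vertex orbit since every proper irreducible parabolic subgroup is by definition conjugate to such an $A_T$. This gives hypothesis (i). For hypothesis (iii), the conjugation-stabilizer of a vertex $A_T$ is exactly its normalizer $N_A(A_T)$, so the generating set furnished by the lemma is $\bigcup_T N_A(A_T)=X_{NP}^A$, the union running over the proper irreducible standard parabolics; this generates $A$ as already observed, and in particular $d_{X_{NP}^A}$ is the relevant word metric.

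The crux is hypothesis (ii), the finiteness of edge orbits, and this is precisely where Proposition \ref{P:SimultStandard} enters. Given an edge $\{P,Q\}$ — a pair of proper irreducible parabolic subgroups whose minimal central elements commute — Proposition \ref{P:SimultStandard} produces a single $g\in A$ with $g^{-1}Pg$ and $g^{-1}Qg$ both standard; since the action preserves adjacency, their image is again an edge, both of whose endpoints are standard. There are only finitely many such edges, as there are finitely many standard parabolic subgroups, so every edge orbit has a representative among them. The step I expect to require the most care is exactly this simultaneous standardization: conjugating $P$ and $Q$ separately into standard position is immediate, but doing so with one common conjugator is not, and Proposition \ref{P:SimultStandard} is what makes it work.

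Finally, with hypotheses (i)--(iii) in place, Lemma \ref{L:Main} would yield that $(A,d_{X_{NP}^A})$ is quasi-isometric to $\mathcal{C}_{parab}(A)$. Since $\Gamma(A,X_{NP}^A)$ is $(1,1)$-quasi-isometric to $(A,d_{X_{NP}^A})$, the graphs $\Gamma(A,X_{NP}^A)$ and $\mathcal{C}_{parab}(A)$ are quasi-isometric, as claimed.
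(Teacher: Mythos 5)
Your proposal is correct and matches the paper's own (much terser) argument: the paper likewise applies Lemma \ref{L:Main} to the conjugation action on $\mathcal C_{parab}(A)$, using finiteness of the set of standard parabolic subgroups for the vertex orbits, Proposition \ref{P:SimultStandard} for the edge orbits, and the identification of vertex stabilizers with normalizers so that the resulting generating set is $X_{NP}^A$. Your added checks (connectedness of $\mathcal C_{parab}(A)$ via \cite{CGGMW} and equivariance of $P\mapsto\Omega_P$) are details the paper leaves implicit, not a different method.
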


In the special case where $A_S$ is of type $A_n$ ($n\geqslant 3$), it is shown in \cite{CGGMW} that $\mathcal C_{parab}(A_S)$ is exactly the same graph (isometric) as the curve graph of a $n+1$ times punctured disk. 
We have: 
\begin{proposition} Suppose that one of the following holds. 
\begin{itemize}
\item[(i)] $A$ is of dihedral type,  or
\item[(ii)] $A$ is of type $A_n$ ($n\geqslant 3$). 
\end{itemize}
Then $X_{NP}^A$ is a hyperbolic structure on $A$. 
\end{proposition}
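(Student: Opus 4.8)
The statement to prove is that $X_{NP}^A$ is a hyperbolic structure on $A$ in the two cases (i) dihedral type and (ii) type $A_n$ ($n\geqslant 3$). The plan is to reduce each case to a hyperbolicity result that has already been established in the excerpt, exploiting the quasi-isometry $\Gamma(A,X_{NP}^A)\simeq \mathcal C_{parab}(A)$ proved in the immediately preceding proposition.

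\begin{proof}
We treat the two cases separately.

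For (ii), assume $A$ is of type $A_n$ with $n\geqslant 3$, so that $A\cong\mathcal B_{n+1}$ and $X_{NP}^A=X_{NP}^{\Bnpo}$. The hypothesis $n\geqslant 3$ guarantees that the braid-group model of Section~\ref{S:BnHyp} applies. By Proposition~\ref{P:Curves}, $\Bnpo$ equipped with the word metric $d_{X_{NP}^{\Bnpo}}$ is quasi-isometric to the curve graph $\mathcal C(\Dnpo)$. Since $\mathcal C(\Dnpo)$ is known to be Gromov-hyperbolic (by \cite{MasurMinsky1}, or the simpler argument of \cite{HPW}, as recalled in Subsection~\ref{Subsection:CurveGraph}), and since Gromov-hyperbolicity is preserved under quasi-isometry, it follows that $(\Bnpo,d_{X_{NP}^{\Bnpo}})$ is Gromov-hyperbolic. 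Hence $X_{NP}^A$ is a hyperbolic structure on $A$. (Equivalently, one may invoke the preceding proposition together with the fact, due to \cite{CGGMW}, that $\mathcal C_{parab}(A_S)$ is isometric to $\mathcal C(\Dnpo)$ in type $A_n$.)

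For (i), assume $A$ is of dihedral type, say of type $I_{2m}$ with $m\geqslant 3$. Here the proper irreducible standard parabolic subgroups are exactly the two cyclic subgroups $\langle s\rangle$ and $\langle t\rangle$ generated by the two standard generators $s,t\in S$. By Proposition~\ref{P:Paris}, the normalizer of each such subgroup equals the centralizer of the corresponding minimal central element; in particular each normalizer contains the central subgroup $\langle\Delta^2\rangle$. Consequently $X_{NP}^A$ contains the finite set $\{s,t\}\cup\langle\Delta^2\rangle$, and in fact differs from such a set only by elements lying at bounded $d_{X_{NP}^A}$-distance from the identity. The desired conclusion is then an application of Corollary~\ref{C:Dihedral}: any finite generating set $X$ of $A$, enlarged by the central subgroup $\langle\Delta^2\rangle$, yields a hyperbolic structure, and all such structures are equivalent. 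Taking $X=\{s,t\}$ gives that $\{s,t\}\cup\langle\Delta^2\rangle$ is a hyperbolic structure, and since $X_{NP}^A$ is equivalent to it, $X_{NP}^A$ is a hyperbolic structure as well.

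The main point to verify carefully is therefore the dihedral case: one must check that each normalizer is \emph{bounded} in terms of the finite generating set together with $\langle\Delta^2\rangle$, so that $X_{NP}^A$ is genuinely equivalent to a generating set covered by Corollary~\ref{C:Dihedral}. This amounts to understanding, via Proposition~\ref{P:Paris}, the centralizers $Z_A(\Omega_{\langle s\rangle})$ and $Z_A(\Omega_{\langle t\rangle})$; since $\langle s\rangle$ and $\langle t\rangle$ have abelian (cyclic) centralizers generated together with the center by $s$, respectively $t$, these normalizers are $\langle s,\Delta^2\rangle$ and $\langle t,\Delta^2\rangle$, each equivalent to $\{s,t\}\cup\langle\Delta^2\rangle$. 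This is the step where the rank-$2$ structure is used in an essential way, and it is the part of the argument requiring the most attention.
\end{proof}
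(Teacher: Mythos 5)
Your case (ii) is correct and is exactly the paper's argument: Proposition~\ref{P:Curves} gives the quasi-isometry between $(A,d_{X_{NP}^A})$ and $\mathcal C(\Dnpo)$, and hyperbolicity of the curve graph finishes. The problem is case (i): the central claim there, that $X_{NP}^A$ is \emph{equivalent} to $\{s,t\}\cup\langle\Delta^2\rangle$, is false, so Corollary~\ref{C:Dihedral} cannot be invoked.

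Here is why the equivalence fails. By Proposition~\ref{P:Paris} and the abelianization argument, $N_A(\langle s\rangle)=Z_A(s)$, which equals $\langle s,\Delta\rangle$ for $m$ even and $\langle s,\Delta^2\rangle$ for $m$ odd (so your formula is also slightly off in the even case, though that slip is harmless since $\Delta$ is then central). In either case the normalizer contains \emph{every} power $s^n$, so $d_{X_{NP}^A}(1,s^n)=1$ for all $n$. Equivalence of generating sets would require $\sup_{x\in X_{NP}^A} d_{\{s,t\}\cup\langle\Delta^2\rangle}(1,x)<\infty$ --- not the condition you check, namely bounded $d_{X_{NP}^A}$-distance, which is trivially satisfied by any generating set. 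But $d_{\{s,t\}\cup\langle\Delta^2\rangle}(1,s^n)$ grows linearly in $n$: by Proposition~\ref{P:PureFree}, $\Gamma(A,S\cup\langle\Delta^2\rangle)$ is a quasi-tree, and powers of $s$ are quasi-geodesics in it (this is exactly the fact the paper uses to show that the inequality $X_{NP}^A\preccurlyeq X_{abs}^A\sim S\cup\langle\Delta^2\rangle$ in Proposition~\ref{P:Dihedral} is \emph{strict}). Geometrically, $\Gamma(A,X_{NP}^A)$ is the quasi-tree with all cosets of $\langle s\rangle$ and $\langle t\rangle$ coned off to diameter~$1$, and hyperbolicity of such a cone-off genuinely requires an argument. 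The paper supplies it by first proving $X_{NP}^A\sim X_P^A$ (via the normalizer computation above) and then establishing hyperbolicity of $\Gamma(A,X_P^A)$ with the guessing-geodesics criterion \cite[Theorem 3.15]{MS}, taking as candidate paths the projections to $\Gamma(A,X_P^A)$ of geodesics of the quasi-tree $\Gamma(A,S\cup\langle\Delta^2\rangle)$. Some such step (or a general theorem on cone-offs along uniformly quasiconvex families) is what is missing from your case (i).
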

\begin{proof}
We refer to Proposition \ref{P:Dihedral} for the proof of~(i). For (ii), this is Proposition \ref{P:Curves}: in this case, the Cayley graph $\Gamma(A, X_{NP}^A)$ is quasi-isometric to the graph $\mathcal C(\mathcal D_{n+1})$. 
\end{proof}

\begin{conjecture}
Let $A$ be any non-cyclic irreducible Artin-Tits group of spherical type. The set $X_{NP}^A$ is a hyperbolic structure on $A$.
\end{conjecture}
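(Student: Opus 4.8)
The plan is to reduce the conjecture to the Gromov-hyperbolicity of the graph of parabolic subgroups. By the quasi-isometry between $\Gamma(A,X_{NP}^A)$ and $\mathcal{C}_{parab}(A)$ established above, together with the quasi-isometry invariance of hyperbolicity, it suffices to prove that $\mathcal{C}_{parab}(A)$ is Gromov-hyperbolic for every irreducible Artin-Tits group of spherical type $A$. For type $A_n$ this is the hyperbolicity of the curve graph \cite{MasurMinsky1,HPW}; the substance of the conjecture therefore lies in the remaining classical and exceptional types, for which no surface model is available and a purely algebraic argument is required.

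To prove hyperbolicity of $\mathcal{C}_{parab}(A)$ I would apply a \emph{guessing-geodesics} criterion in the spirit of \cite{MasurMinsky1,HPW}: assign to each ordered pair of proper irreducible parabolic subgroups $P,Q$ a path $\eta(P,Q)$ in $\mathcal{C}_{parab}(A)$ joining them, and verify that (a) $\eta(P,Q)$ has uniformly bounded diameter whenever $P$ and $Q$ are adjacent, and (b) the assignment satisfies a uniform slimness condition, $\eta(P,Q)$ being contained in the $M$-neighbourhood of $\eta(P,R)\cup\eta(R,Q)$ for all $P,Q,R$. The construction of $\eta(P,Q)$ should be the algebraic analogue of the unicorn (surgery) paths of \cite{HPW}, built from the lattice structure on the set of parabolic subgroups of $A$ \cite{CGGMW}. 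Since the whole construction can be made $A$-equivariant, one may conjugate so that $P$ is standard; one then interpolates between $P$ and $Q$ by a monotone sequence of parabolics produced from successive meets and joins of $Q$ with the members of a fixed maximal chain of standard parabolic subgroups, using Proposition \ref{P:SimultStandard} to place each consecutive commuting pair in standard position. Because adjacency in $\mathcal{C}_{parab}(A)$ is governed by commutation of the minimal central elements $\Omega_P,\Omega_Q$ (Proposition \ref{P:Paris}), the delicate point is to arrange the interpolating parabolics so that consecutive ones have commuting central elements.

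The hard part will be the slimness condition (b). In the surface case the thinness of unicorn triangles rests on the fact that arc surgery strictly decreases geometric intersection number, providing an intrinsically finite and well-controlled combinatorial process; for general spherical type there is no intersection number, and it must be replaced by a Garside-theoretic or lattice-theoretic measure of complexity that decreases monotonically along $\eta(P,Q)$ and behaves quasi-additively along triangles. Producing such a complexity, compatible with the meet and join operations on parabolics, is in my view the crucial obstacle. A tempting alternative that relocates the difficulty is to establish the conjectured equivalence $X_{NP}^A\sim X_{abs}^A$: the inequality $X_{NP}^A\preccurlyeq X_{abs}^A$ is already known \cite{CalvezWiest1,AntolinCumplido}, and since $\Gamma(A,X_{abs}^A)$ is quasi-isometric to the hyperbolic graph $\mathcal{C}_{AL}(A)$ \cite{CalvezWiest1,CalvezWiest2}, the conjecture would follow from the reverse inequality $X_{abs}^A\preccurlyeq X_{NP}^A$, that is, from a uniform bound on the number of absorbable factors needed to express any element in the normalizer of a proper irreducible standard parabolic subgroup. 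This reformulation is appealing but presently out of reach, since absorbable elements are poorly understood; I would therefore pursue the direct hyperbolicity argument for $\mathcal{C}_{parab}(A)$ as the primary strategy.
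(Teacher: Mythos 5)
The statement you set out to prove is an open conjecture of the paper, not a theorem: the authors establish it only for dihedral type and for type $A_n$ (via Proposition \ref{P:Curves}), and in general they prove exactly the reduction you begin with, namely that $\Gamma(A,X_{NP}^A)$ is quasi-isometric to $\mathcal{C}_{parab}(A)$. Your framing therefore matches the paper's, but your proposal is a research program, not a proof, and the gap sits precisely where you yourself locate it: the paths $\eta(P,Q)$ are never constructed, and the Garside- or lattice-theoretic substitute for geometric intersection number --- the monotone complexity that makes unicorn triangles uniformly slim in \cite{HPW} --- is not produced. Conditions (a) and (b) of the guessing-geodesics criterion are stipulated, not verified, so nothing is established; the ``crucial obstacle'' you name is the entire mathematical content of the conjecture. (A minor misattribution along the way: adjacency in $\mathcal{C}_{parab}(A)$ being the commutation of $\Omega_P$ and $\Omega_Q$ is the \emph{definition} of the graph in \cite{CGGMW}, not a consequence of Proposition \ref{P:Paris}, which concerns normalizers of standard parabolic subgroups.)

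Your fallback route contains a genuine directional error, not merely an admission of difficulty. In the paper's convention, $X\preccurlyeq Y$ holds when every element of $Y$ has uniformly bounded $d_X$-length; thus the statement you gloss as ``a uniform bound on the number of absorbable factors needed to express any element in the normalizer of a proper irreducible standard parabolic subgroup'' is exactly the inequality $X_{abs}^A\preccurlyeq X_{NP}^A$, which is already proved in \cite{CalvezWiest1,AntolinCumplido} and is quoted as such in Proposition \ref{P:Comparation3}; proving it again yields nothing new. What the conjecture actually needs --- this is Conjecture \ref{C:StrictInequalities}(i) --- is the transposed bound: that every \emph{absorbable} element can be written as a product of a uniformly bounded number of elements lying in normalizers of proper irreducible standard parabolic subgroups, i.e.\ $X_{NP}^A\preccurlyeq X_{abs}^A$. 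Only with both directions does the identity map become a bilipschitz equivalence (hence a quasi-isometry) between $\Gamma(A,X_{NP}^A)$ and $\Gamma(A,X_{abs}^A)$, allowing hyperbolicity to be pulled back from $\mathcal{C}_{AL}(A)$; a map that is Lipschitz in one direction only does not transfer hyperbolicity. As written, your reformulation is circular: it rests on the known inequality and never reaches the equivalence it invokes.
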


\subsubsection{Absorbable elements and additional length graph}

Let $X_{abs}^A$ be the set of absorbable elements (together with the cyclic subgroup generated by $\Delta^2$ if $A$ is of dihedral type). The definition of absorbable element of a Garside group was proposed in \cite{CalvezWiest1}. It is not difficult to see that each element in $S$ is absorbable, hence $X_{abs}$ generates~$A$. 

Recall from \cite{CalvezWiest1} the definition of the \emph{additional length graph} $\mathcal C_{AL}(A)$: its vertices are right cosets of $A$ modulo the cyclic subgroup generated by $\Delta$; there is an edge between the vertices $a\langle \Delta\rangle$ and $b\langle\Delta\rangle$ if there is some $u$ which is either simple (i.e. a positive prefix of $\Delta$) or absorbable such that $au$ belongs to the coset $b\langle\Delta\rangle$. 

\begin{lemma}\label{P:CALQI}
 $\mathcal C_{AL}(A)$ and $\Gamma(A,X_{abs}^A)$ are quasi-isometric. 
 \end{lemma}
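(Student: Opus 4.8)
The plan is to show that the natural projection $\pi\colon A\to \mathcal C_{AL}(A)$ sending an element $a$ to the vertex $a\langle\Delta\rangle$ induces a quasi-isometry from $\Gamma(A,X_{abs}^A)$ onto $\mathcal C_{AL}(A)$. The first step I would take is to record the fact that makes the fibres of $\pi$ coarsely negligible, namely that each coset $a\langle\Delta\rangle$ has uniformly bounded diameter in $d_{X_{abs}^A}$. If $A$ is not of dihedral type, then by \cite[Example 3]{CalvezWiest1} every power $\Delta^k$ is a product of at most $3$ absorbable elements, so $d_{X_{abs}^A}(a,a\Delta^k)\leqslant 3$ for all $k$. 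If $A$ is of dihedral type, then $\Delta^{2k}\in X_{abs}^A$ while $\Delta^{2k+1}=\Delta\cdot\Delta^{2k}$, and $\Delta$ is a positive word of length $\ell(\Delta)$ in the generators $S$, each of which is absorbable; hence the diameter is at most $\ell(\Delta)+1$. Let $D$ denote the resulting uniform bound.

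Next I would verify that $\pi$ is $1$-Lipschitz as a map $\Gamma(A,X_{abs}^A)\to\mathcal C_{AL}(A)$. It suffices to treat an edge of the Cayley graph, that is a pair $a,ax$ with $x\in X_{abs}^A$. If $x$ is absorbable, then taking $u=x$ in the definition of $\mathcal C_{AL}(A)$ shows that $a\langle\Delta\rangle$ and $ax\langle\Delta\rangle$ are either equal or joined by an edge; if $x=\Delta^{2k}$ in the dihedral case, the two cosets coincide. In either case $d_{\mathcal C_{AL}}(\pi a,\pi(ax))\leqslant 1$, and therefore $d_{\mathcal C_{AL}}(\pi a,\pi b)\leqslant d_{X_{abs}^A}(a,b)$ for all $a,b\in A$.

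For the reverse inequality I would exhibit a Lipschitz coarse inverse by choosing, for each vertex, a representative $\sigma(a\langle\Delta\rangle)\in A$; the crux is that adjacent vertices of $\mathcal C_{AL}(A)$ lift to pairs at bounded $d_{X_{abs}^A}$-distance. Such an edge is witnessed by some $u$ that is either absorbable or simple, with $au\in b\langle\Delta\rangle$. If $u$ is absorbable then $d_{X_{abs}^A}(a,au)=1$. If $u$ is simple then it is a positive prefix of $\Delta$, hence a positive word of length at most $\ell(\Delta)$ in $S$; since each element of $S$ is absorbable, $d_{X_{abs}^A}(a,au)\leqslant \ell(\Delta)$. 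Using the fibre bound $D$ to pass from $a$ and from $au$ to the chosen representatives $\sigma(a\langle\Delta\rangle)$ and $\sigma(b\langle\Delta\rangle)$ then shows that $\sigma$ is Lipschitz with constant at most $\ell(\Delta)+2D$.

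Finally I would assemble these estimates: $\pi$ is onto on vertices, $\sigma\circ\pi$ displaces each point of $A$ by at most $D$, $\pi\circ\sigma$ is the identity on vertices, and both $\pi$ and $\sigma$ are Lipschitz, so $\pi$ is a quasi-isometry by the standard criterion. I do not expect a genuine obstacle here; the one point requiring care is the uniform bound $D$ on the diameter of the $\langle\Delta\rangle$-cosets, which is precisely where the non-obvious fact that arbitrary powers of $\Delta$ are bounded products of absorbable elements is used, together with its dihedral-type workaround provided by adjoining $\langle\Delta^2\rangle$ to $X_{abs}^A$. Everything else reduces to the elementary observation that simple elements are short positive words in the absorbable generators $S$.
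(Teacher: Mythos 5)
Your proof is correct and takes essentially the same route as the paper's: bound the $d_{X_{abs}^A}$-diameter of the fibres $a\langle\Delta\rangle$ using the fact that arbitrary powers of $\Delta$ are uniformly bounded products of elements of $X_{abs}^A$ (with the dihedral case handled via $\langle\Delta^2\rangle$), then show that the projection $a\mapsto a\langle\Delta\rangle$ and an arbitrary choice of coset representatives are Lipschitz quasi-inverses, using that simple elements are short positive words in the absorbable generators $S$. The only differences are cosmetic constants: your dihedral fibre bound $\ell(\Delta)+1$ versus the paper's $4$, and your explicit Lipschitz constant $\ell(\Delta)+2D$ versus the paper's $M=\max_{s,j} d_{X_{abs}^A}(1,s\Delta^j)$.
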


\begin{proof}
It is enough to check the quasi-isometry of the respective vertex sets. Any coset $a\langle\Delta\rangle$ has diameter at most 4 in $\Gamma(A,X_{abs}^A)$: 
any two elements of such a coset differ by a power of~$\Delta$; and any power of~$\Delta$ can be written as a product of at most 4 elements of $X_{abs}^A$  in the dihedral case (because $\Delta^q=\Delta^{2\lfloor \frac{q}{2}\rfloor} \cdot a \cdot b \cdot b^{-1}a^{-1}\Delta$), and of at most 3 absorbable elements in the general case: (the argument of 
\cite[Example 3]{CalvezWiest1} for braid groups with at least 4 strands can be easily adapted to any Artin-Tits group of rank $\geqslant 3$).
For any coset $a\langle\Delta\rangle$, choose an arbitrary representative; this defines a map~$\phi$ from the vertices of $\mathcal C_{AL}$ to $A$. Conversely, given $a\in A$, we associate to~$a$ the coset $\psi(a)=a\langle\Delta\rangle$.  
It is straightforward to check that these maps are quasi-inverses 
of each other with respect to the graph metrics on $\mathcal C_{AL}(A)$ and $\Gamma(A,X_{abs}^A)$. It is also clear that $\psi$ is 1-Lipschitz. On the other hand, as the set of simple elements (positive divisors of $\Delta$) is finite, we can define $M$ as the maximum of $d_{X_{abs}^A}(1,s\Delta^j)$, for $s$ simple and $j\in \mathbb Z$. It then follows that $\phi$ is $M$-Lipschitz.
\end{proof}

The graph $\mathcal C_{AL}(A)$ was shown to be hyperbolic in \cite{CalvezWiest1}; therefore Lemma \ref{P:CALQI} shows:

\begin{proposition}
Let $A$ be any non-cyclic irreducible Artin-Tits group of spherical type. The set $X_{abs}^A$ is a hyperbolic structure on $A$. 
\end{proposition}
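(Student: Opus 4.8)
The plan is to deduce the Gromov-hyperbolicity of $\Gamma(A,X_{abs}^A)$ from the already-established hyperbolicity of the additional length graph $\mathcal C_{AL}(A)$, transporting it across the quasi-isometry supplied by Lemma \ref{P:CALQI}. By definition, $X_{abs}^A$ is a hyperbolic structure precisely when $(A,d_{X_{abs}^A})$ — equivalently the Cayley graph $\Gamma(A,X_{abs}^A)$ — is Gromov-hyperbolic, so the entire statement reduces to checking that hyperbolicity survives passage through that quasi-isometry.

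First I would record that $X_{abs}^A$ genuinely generates $A$: every standard generator $s\in S$ is absorbable, and in the dihedral case the cyclic subgroup $\langle\Delta^2\rangle$ has been adjoined, so $\Gamma(A,X_{abs}^A)$ is a connected graph and hence, with its graph metric, a geodesic metric space. Next I would invoke the input from \cite{CalvezWiest1}, where $\mathcal C_{AL}(A)$ is proved to be Gromov-hyperbolic; this is the substantive geometric content and is used here as a black box. The graph $\mathcal C_{AL}(A)$, being a connected graph with the graph metric, is likewise geodesic.

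The final step is to apply the standard fact that Gromov-hyperbolicity is a quasi-isometry invariant within the class of geodesic metric spaces. By Lemma \ref{P:CALQI}, the spaces $\Gamma(A,X_{abs}^A)$ and $\mathcal C_{AL}(A)$ are quasi-isometric; both are geodesic; hence hyperbolicity of the latter forces hyperbolicity of the former. This shows that $\Gamma(A,X_{abs}^A)$ is Gromov-hyperbolic, i.e.\ that $X_{abs}^A$ is a hyperbolic structure on $A$, which is the assertion.

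The only point demanding care — and the sole place where the argument could fail if applied carelessly — is the geodesic (or at least length-space) hypothesis in the quasi-isometry invariance of hyperbolicity: invariance does not hold for arbitrary metric spaces, so one must genuinely use that both the Cayley graph and the additional length graph are connected graphs. All the real difficulty has been discharged upstream, into the quasi-isometry of Lemma \ref{P:CALQI} and the hyperbolicity theorem of \cite{CalvezWiest1}, leaving the present proof a matter of assembly.
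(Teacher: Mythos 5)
Your proof is correct and follows exactly the paper's own argument: the paper likewise deduces the proposition by combining the hyperbolicity of $\mathcal C_{AL}(A)$ established in \cite{CalvezWiest1} with the quasi-isometry of Lemma \ref{P:CALQI}, using quasi-isometry invariance of hyperbolicity for geodesic spaces. Your added care about the generation of $A$ by $X_{abs}^A$ and the geodesicity hypothesis is sound and matches remarks the paper makes just before stating the result.
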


\subsection{Comparisons and further results}

In this subsection, we compare the different generating sets for $A$ described above. 
We will first look at Artin-Tits groups of dihedral type, and then at all other types. 

\begin{proposition}\label{P:Dihedral}
Assume that $A$ is of type $I_{2m}$ ($m\geqslant 3$). Then we have 
$$X_P^A \sim X_{NP}^A\preccurlyeq X_{abs}^A\sim S\cup\langle\Delta^2\rangle,$$
where the middle inequality is strict. All the sets mentioned are hyperbolic structures on $A$ and the corresponding Cayley graphs all have infinite diameter.
\end{proposition}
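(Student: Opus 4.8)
The plan is to establish the four assertions of the chain in turn: the equivalence $X_P^A\sim X_{NP}^A$, the inequality $X_P^A\preccurlyeq X_{abs}^A$ together with its strictness, the equivalence $X_{abs}^A\sim S\cup\langle\Delta^2\rangle$, and finally the hyperbolicity and infinite-diameter claims. For the first equivalence, I would exploit that $A=A_S$ has rank~2, say $S=\{a,b\}$. The only proper irreducible standard parabolic subgroups are $\langle a\rangle$ and $\langle b\rangle$, each infinite cyclic; so $X_P^A=\langle a\rangle\cup\langle b\rangle\cup\langle\Delta^2\rangle$. By Proposition~\ref{P:Paris}, the normalizer of $\langle a\rangle$ is $Z_A(\Omega_{\langle a\rangle})=Z_A(a)=\langle a\rangle\times\langle\Delta^2\rangle$ (and similarly for $b$), since $\Delta^2$ is central and the centralizer of a standard generator in a dihedral Artin--Tits group is generated by that element together with the center. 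Hence $X_{NP}^A=(\langle a\rangle\cup\langle b\rangle)\cdot\langle\Delta^2\rangle$, and every element of $X_{NP}^A$ is a product of \emph{two} elements of $X_P^A$ while $X_P^A\subset X_{NP}^A$; this gives $X_P^A\sim X_{NP}^A$ directly from the definition $\sup_{y\in Y}d_X(1,y)<\infty$.

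For the equivalence $X_{abs}^A\sim S\cup\langle\Delta^2\rangle$, note first that $S\cup\langle\Delta^2\rangle=\{a,b\}\cup\langle\Delta^2\rangle\subseteq X_{abs}^A$ (each generator is absorbable, and $\langle\Delta^2\rangle$ is included by the dihedral clause in the definition), so $X_{abs}^A\preccurlyeq S\cup\langle\Delta^2\rangle$ fails only if it goes the \emph{other} way; I must show conversely that every absorbable element lies at bounded $(S\cup\langle\Delta^2\rangle)$-distance from the identity. This is where I expect the main obstacle. The argument should use the structure theorem Proposition~\ref{P:PureFree}: modulo $\langle\Delta^2\rangle$, the group $A$ is virtually a free group of rank $m-1$, and the quotient graph $\Gamma(A/\langle\Delta^2\rangle,\overline{S})$ is a Cayley graph of a virtually free group on the finite generating image of~$S$. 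The key point to verify is that the image of an absorbable element in $A/\langle\Delta^2\rangle$ has uniformly bounded word length with respect to $\overline{S}$; since in the dihedral case the set of absorbable elements is explicitly understood (one analyzes the normal form, the absorbable elements being roughly those of bounded infimum/supremum in the Garside structure), this bound should be extractable, giving $S\cup\langle\Delta^2\rangle\preccurlyeq X_{abs}^A$ and hence equivalence.

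For the strict middle inequality $X_P^A\preccurlyeq X_{abs}^A$ (strictly), the forward direction is immediate since each of $a,b,\Delta^2$ is absorbable, so $X_P^A\subseteq X_{abs}^A$ and $X_P^A\preccurlyeq X_{abs}^A$ trivially. Strictness means $X_{abs}^A\not\preccurlyeq X_P^A$, i.e.\ there exist absorbable elements with arbitrarily large $X_P^A$-distance from~$1$. Here I would produce an explicit infinite family: by the virtually-free description of $A/\langle\Delta^2\rangle$, the $X_P^A$-metric is quasi-isometric to the metric in which one collapses the two cyclic factors $\langle\overline a\rangle,\langle\overline b\rangle$ to bounded sets, so long alternating products $\overline a\,\overline b\,\overline a\,\overline b\cdots$ (reduced words in the free quotient) have unbounded $X_P^A$-length; one then checks that suitable lifts of such words are absorbable, witnessing $X_{abs}^A\not\preccurlyeq X_P^A$.

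Finally, for hyperbolicity and infinite diameter: all three Cayley graphs contain $\langle\Delta^2\rangle$ in their generating set, so each is quasi-isometric to the corresponding Cayley graph of $A/\langle\Delta^2\rangle$ on the image generating set, exactly as in the proof of Corollary~\ref{C:Dihedral}. By Proposition~\ref{P:PureFree} and the \v{S}varc--Milnor lemma this quotient is quasi-isometric to a regular tree of valence related to $m-1$, hence Gromov-hyperbolic and of infinite diameter; pulling back through the quasi-isometries established above shows all four spaces are hyperbolic of infinite diameter. The infinite-diameter assertion can alternatively be read off from the unbounded alternating family already constructed for strictness.
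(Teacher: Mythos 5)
Your plan goes wrong at three places, two of which are fatal. First, the middle inequality and its strictness: with the paper's convention ($X\preccurlyeq Y$ iff $\sup_{y\in Y}d_X(1,y)<\infty$), an inclusion of generating sets $X\subseteq Y$ gives $Y\preccurlyeq X$, \emph{not} $X\preccurlyeq Y$ (compare the paper's own deduction of $X_{NP}^A\preccurlyeq X_P^A$ from $X_P^A\subset X_{NP}^A$), so even if your inclusion $X_P^A\subseteq X_{abs}^A$ held it would prove the wrong direction. Worse, that inclusion is false in the dihedral case: the set of absorbable elements of $A$ is \emph{finite} (it has $4m-8$ elements), so the powers $a^n\in X_P^A$ are not absorbable for large $n$. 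This finiteness -- which you only gesture at as ``a bound that should be extractable'' in your discussion of $X_{abs}^A\sim S\cup\langle\Delta^2\rangle$ -- is exactly the key fact: it gives $X_{NP}^A\preccurlyeq X_{abs}^A$ (every absorbable element has bounded $X_P^A$-length) and also $X_{abs}^A\sim S\cup\langle\Delta^2\rangle$. Strictness then means that some elements of $X_{NP}^A$ have unbounded $X_{abs}^A$-length (witnesses: the powers $a^n$, which have $X_P^A$-length $1$ but go to infinity in the quasi-tree $\Gamma(A,S\cup\langle\Delta^2\rangle)$). Your rendering of strictness -- ``absorbable elements with arbitrarily large $X_P^A$-distance'' -- is the \emph{negation of the middle inequality itself}, and your proposed witnesses (absorbable lifts of long alternating words) cannot exist, again by finiteness; if they did, the proposition would be false.

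Second, the hyperbolicity/infinite-diameter step collapses, and in a way that contradicts the strictness you are meant to prove. \v{S}varc--Milnor, as used in Corollary \ref{C:Dihedral}, applies to $\Gamma(A/\langle\Delta^2\rangle,\overline X)$ only when $\overline X$ is \emph{finite}; the images of $X_P^A$ and $X_{NP}^A$ in $A/\langle\Delta^2\rangle$ contain the infinite cyclic subgroups $\langle\overline a\rangle,\langle\overline b\rangle$, and the corresponding Cayley graphs are cone-offs of the quasi-tree over the cosets of these subgroups, not quasi-isometric to it. If your claimed quasi-isometry existed, you would get $X_P^A\sim S\cup\langle\Delta^2\rangle\sim X_{abs}^A$, contradicting strictness. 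This is precisely why the paper argues separately here: hyperbolicity of $\Gamma(A,X_P^A)$ is obtained from the guessing-geodesics criterion of Masur--Schleimer, applied to the projections of geodesics of the hyperbolic graph $\Gamma(A,S\cup\langle\Delta^2\rangle)$, and infinite diameter is obtained by an ends-counting argument (points of bounded $X_P^A$-length sweep out only countably many ends of the quasi-tree, which has uncountably many). Neither of these has a substitute in your proposal. Finally, a minor slip in the one part that does match the paper (the equivalence $X_P^A\sim X_{NP}^A$ via normalizers = centralizers): your formula $Z_A(a)=\langle a\rangle\times\langle\Delta^2\rangle$ fails for $m$ even, where $Z_A(a)=\langle a,\Delta\rangle$ (the paper gets this from the Franco--Gonz\'alez-Meneses algorithm); the equivalence survives since $\Delta$ has bounded $X_P^A$-length, but your claim that every element of $X_{NP}^A$ is a product of \emph{two} elements of $X_P^A$ is then false.
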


\begin{proof}
Denote by $a,b$ the generators of $A$. 
In order to prove that $X_P^A \sim X_{NP}^A$, we first 
observe that $X_P^A\subset X_{NP}^A$ whence $X_{NP}^A\preccurlyeq X_P^A$. For the converse inequality, it will be enough to show that the normalizer of $\langle a\rangle$ ($\langle b\rangle$ respectively) is generated by $\Omega$ and $a$ ($\Omega$ and $b$ respectively). 
In order to do so, we first observe that for any $g\in N_A(\langle a\rangle)$ we have $g^{-1} a g\in \langle a\rangle$, and using the abelianisation we see that $g^{-1} a g=a$, i.e., 
the normalizer of $\langle a\rangle$ coincides with $Z_A(a)$, the centraliser of $a$. This centraliser can be very easily computed using the algorithm in \cite{FrancoGM}. For $m$ even, we get that $Z_A(a)=\langle a, \Delta\rangle$. 
For $m$ odd, we obtain $\{a, \Pi(b,a;m-2)a^2\Pi(b,a;m-2)\}$ as a generating set; up to right and left multiplying the second generator by $a$, this gives the generating set $\{a,\Delta^2\}$. In any case, an element of $N_A(\langle a \rangle)$ can be written as $\Omega^p a^q$, which has length at most 
$(2+\|\Delta\|_{X_P^A})$ with respect to $d_{X_P^A}$ -- and similarly for $N_A(\langle b\rangle)$.
It follows that the identity map from $(A,d_{X_{NP}^A})$ to $(A,d_{X_{P}^A})$ is  $(2+\|\Delta\|_{X_P^A})$-Lipschitz.
This shows the first equivalence. 

In order to prove the middle inequality, we note that the set of absorbable elements is finite with $4m-8$ elements (see \cite[Example 2.2]{CalvezWiest1} for $m=3$).
Letting $M=\sup\{\|x\|_{X_P ^A},\ x\in X_{abs}^A\setminus\langle \Delta^2\rangle\}$, we see that the identity is an $M$-Lipschitz map from $(A,d_{X_{abs}^A})$ to $(A, d_{X_P^A})$, whence the middle inequality.
This inequality is not an equivalence, because we have $d_{X_P^A}(Id, a^n)=1$ for all~$n$ but $d_{X_{abs}^A}(Id, a^n)\to_{n\to \infty} +\infty$.

For the second equivalence, we use again that there are only finitely many absorbable elements. It follows from Corollary \ref{C:Dihedral} that 
$X_{abs}^A$ and $S\cup\langle\Delta^2\rangle$ are equivalent hyperbolic structures on $A$. 


Next we prove that $X_P^A$ is a hyperbolic structure. The proof uses the ``guessing geodesics lemma''  \cite[Theorem 3.15]{MS}.


Given $g,h$ vertices of $X_P^A$, consider the subgraph $A_{gh}$ consisting of the projections to $\Gamma(A,X_P^A)$ of the set of geodesics in $\Gamma(A,S\cup \langle \Delta^2 \rangle)$ between $g$ and $h$. Because the identity from $\Gamma(A,S\cup \langle \Delta^2 \rangle)$ to $\Gamma(A,X_P^A)$  is Lipschitz and $\Gamma(A,S\cup \langle \Delta^2 \rangle)$ is hyperbolic, the different subgraphs $A_{g,h}$ form thin triangles in $\Gamma(A,X_P^A)$ so they satisfy the second condition of \cite[Theorem 3.15]{MS}.

What happens if $g$, $h$ are 1 apart in $\Gamma(A,X_P^A)$? 
Observe first that if $g,h\in A$ satisfy $g^{-1}h=a^n$ (or $b^n$, with $n\in \mathbb Z$), then in $\Gamma(A,S\cup \langle \Delta^2 \rangle)$, there is a unique geodesic between $g$ and $h$, namely $g, ga, ga^2,\ldots, h$. 
Thus the subgraph $A_{gh}$ has diameter 1 as well; hence the first condition in \cite[Theorem 3.15]{MS} is also satisfied.

We conclude that $\Gamma(A,X_P^A)$ is hyperbolic.

Finally, we have to prove that $\Gamma(A,X_P^A)$ has infinite diameter.
Let us assume, for a contradiction, that there exists some number~$L$ such that every element $x$ of~$A$ can be written in the form 
$$
x=\Delta^{k_0} a^{k_1} b^{k_2} a^{k_3} b^{k_4} \ldots c^{k_L} \text{ \ \ or \ \ }
x=\Delta^{k_0} b^{k_1} a^{k_2} b^{k_3} a^{k_4} \ldots c^{k_L} 
$$
with $k_0, k_1, \ldots, k_L\in\mathbb Z$, and $c=a$ or $c=b$, according to the parity of~$L$. Now, we recall from Proposition \ref{P:PureFree} that $\Gamma(A,S\cup \langle \Delta^2 \rangle)$ is quasi-isometric to an infinite regular $2(m-1)$-valent tree -- in particular, as a metric space it has uncountably many ends. Also, powers of~$a$ and powers of~$b$ represent quasi-geodesics in this quasi-tree.  However, the set of points in the tree represented by words as above has only countably many ends, leading to a contradiction.
%
\end{proof}

For the rest of the section, we now assume that $A$ has rank at least 3. 

\begin{proposition}\label{P:Comparation3}
Suppose that $A$ is an irreducible Artin-Tits group of spherical type with rank at least 3. 
Then
$X_{abs}^A\preccurlyeq X_{NP}^A\preccurlyeq X_P^A$. 
\end{proposition}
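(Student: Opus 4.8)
The two inequalities are proved separately; the right-hand one is immediate, and the left-hand one carries all the content. I would treat $X_{NP}^A\preccurlyeq X_P^A$ first, by exhibiting a set inclusion and reading off the Lipschitz bound. Every proper irreducible standard parabolic subgroup $A_T$ is contained in its own normalizer $N_A(A_T)$, so the union of all such $A_T$ lies in $X_{NP}^A$; moreover $\Delta_S^2$ is central (since the center is generated by $\Omega_S\in\{\Delta_S,\Delta_S^2\}$), hence $\Delta_S^2\in N_A(A_T)$ for every $T$, so $\langle\Delta_S^2\rangle\subseteq X_{NP}^A$ as well. As $A$ has rank at least $3$, proper irreducible standard parabolics do exist, so this makes sense. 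Thus $X_P^A\subseteq X_{NP}^A$, and every $y\in X_P^A$ satisfies $d_{X_{NP}^A}(1,y)\le 1$; therefore $\sup_{y\in X_P^A}d_{X_{NP}^A}(1,y)<\infty$, which is exactly $X_{NP}^A\preccurlyeq X_P^A$ with Lipschitz constant $1$.

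For the inequality $X_{abs}^A\preccurlyeq X_{NP}^A$ I would unwind the definition of $\preccurlyeq$: it suffices to bound $d_{X_{abs}^A}(1,g)$ uniformly as $g$ ranges over $X_{NP}^A$. Since $X_{NP}^A$ is a \emph{finite} union of normalizers $N_A(A_T)$ of proper irreducible standard parabolic subgroups, it is enough to produce, for each such $T$, a constant $C_T$ with $d_{X_{abs}^A}(1,g)\le C_T$ for all $g\in N_A(A_T)$; the desired global bound is then the maximum of the finitely many $C_T$. By Proposition \ref{P:Paris} I may replace $N_A(A_T)$ by the centralizer $Z_A(\Omega_T)$, and by Lemma \ref{P:CALQI} I may measure distances in the quasi-isometric model $\mathcal{C}_{AL}(A)$ instead. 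Under this identification the basepoint $1$ corresponds to the vertex $\langle\Delta\rangle$, on which $A$ acts by left multiplication, so the statement to prove becomes: \emph{the orbit of $\langle\Delta\rangle$ under $Z_A(\Omega_T)$ has finite diameter in $\mathcal{C}_{AL}(A)$.}

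To establish this bounded-orbit statement I would note that, by Proposition \ref{P:Paris}, every $g\in Z_A(\Omega_T)$ preserves the proper parabolic $A_T$ and is therefore a \emph{reducible} element, and then appeal to the analysis of absorbable elements from \cite{CalvezWiest1} together with \cite{AntolinCumplido}: elements preserving a proper parabolic are uniformly bounded products of absorbable elements, equivalently the normalizer of a standard parabolic acts with uniformly bounded orbits on $\mathcal{C}_{AL}(A)$. Concretely one can try to decompose a given $g\in Z_A(\Omega_T)$ into an ``interior'' factor supported on $A_T$, the central twist $\Omega_T$ (whose powers stay within a bounded region, since $\Omega_T$ is not pseudo-Anosov and hence acts with bounded orbits on the hyperbolic graph $\mathcal{C}_{AL}(A)$), and a complementary factor, and to check that each piece has uniformly bounded additional length.

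The main obstacle is precisely this last point. The normalizer $N_A(A_T)$ contains elements that are pseudo-Anosov on a proper sub-configuration as well as arbitrarily large powers of the central twist $\Omega_T$, so the required uniform bound cannot be obtained from any finiteness consideration; it must instead be extracted from the fine structure of absorbable elements, i.e.\ from the way an element fixing a parabolic is absorbed in the Garside normal form. This is exactly where the input of \cite{CalvezWiest1,AntolinCumplido} is indispensable, and where the real work of the proof lies; the two $\preccurlyeq$ relations then combine to give $X_{abs}^A\preccurlyeq X_{NP}^A\preccurlyeq X_P^A$.
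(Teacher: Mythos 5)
Your proposal is correct and takes essentially the same approach as the paper: the inequality $X_{NP}^A\preccurlyeq X_P^A$ follows from the inclusion $X_P^A\subseteq X_{NP}^A$, and $X_{abs}^A\preccurlyeq X_{NP}^A$ rests on the same key external input you cite, namely the result of \cite{CalvezWiest1,AntolinCumplido} that any element normalizing a proper irreducible standard parabolic subgroup is a product of at most $9$ absorbable elements. The paper invokes that bound directly to conclude that the identity map from $(A,d_{X_{NP}^A})$ to $(A,d_{X_{abs}^A})$ is $9$-Lipschitz, so your detour through $\mathcal{C}_{AL}(A)$ and the sketched interior/twist decomposition, while harmless, are not needed.
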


\begin{proof}
For the second inequality, it is enough to observe that $X_P^A\subset X_{NP}^A$ whence the identity map $(A,d_{X_P^A})\longrightarrow (A,d_{X_{NP}^A})$ is 1-Lipschitz.

The first inequality was shown for braids in \cite{CalvezWiest1}: any braid in the stabilizer of a standard curve can be written as a product of 9 absorbable braids \cite[Lemma 11]{CalvezWiest1}. This result has been generalized to the case of a general Artin-Tits group $A$ of spherical type in the forthcoming paper \cite{AntolinCumplido}, where it is shown that any element which normalizes a proper irreducible standard parabolic subgroup of $A$ is a product of at most 9 absorbable elements. This is to say that the identity map from $(A,d_{X_{NP}^A})$ to $(A,d_{X_{abs}^A})$ is 9-Lipschitz. 
\end{proof}

\begin{corollary}\label{C:InfDiam}
The metric spaces $(A,d_{X_{NP}^A})$ and $(A,d_{X_P^A})$ have infinite diameter. 
\end{corollary}
\begin{proof}
This is just a combination of Proposition \ref{P:Comparation3} and Lemma \ref{P:CALQI} together with \cite[Theorem 1.1]{CalvezWiest2}, which asserts that $\mathcal C_{AL}(A)$ has infinite diameter.
\end{proof}

\subsection{Open problems}

\begin{conjecture}\label{C:StrictInequalities}
\begin{itemize}
\item[(i)] The inequality $X_{abs}^A\preccurlyeq X_{NP}^A$ is not strict:  the converse inequality $X_{NP}^A\preccurlyeq X_{abs}^A$ also holds, and the identity map 
$\Gamma(A,X_{NP}^A) \to \Gamma(A,X_{abs}^A)$ is a quasi-isometry. 
(In the case of Artin braid group, this is claiming that $\mathcal C_{AL}(\mathcal{B}_n)$ is quasi-isometric to the curve graph of the punctured disk $\mathcal C(\Dn)$.)   
\item[(ii)] The inequality $X_{NP}^A\preccurlyeq X_P^A$ is strict, i.e., 
the identity map $\Gamma(A,X_{P}^A)\to \Gamma(A,X_{NP}^A)$ is Lipschitz but \emph{not} a quasi-isometry.
\end{itemize}
\end{conjecture}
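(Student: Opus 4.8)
The two parts pull in opposite directions, so I would treat them separately, in each case exploiting the quasi-isometric models recorded in Table~\ref{Table}. For $A$ of type $A_n$ there are the topological models $\Gamma(A,X_{NP}^A)\simeq\mathcal C(\Dnpo)$ (Proposition~\ref{P:Curves}) and $\Gamma(A,X_P^A)\simeq\mathcal A_{\partial}(\Dnpo)$ (Proposition~\ref{P:PartialA}); in general one has $\Gamma(A,X_{NP}^A)\simeq\mathcal C_{parab}(A)$ and $\Gamma(A,X_{abs}^A)\simeq\mathcal C_{AL}(A)$ (Lemma~\ref{P:CALQI}). Since Proposition~\ref{P:Comparation3} already provides the chain of Lipschitz maps $\Gamma(A,X_P^A)\to\Gamma(A,X_{NP}^A)\to\Gamma(A,X_{abs}^A)$, for~(i) it remains to establish the reverse inequality $X_{NP}^A\preccurlyeq X_{abs}^A$, whereas for~(ii) --- which I would tackle first, as it needs only a single witnessing family --- it suffices to exhibit elements defeating any lower Lipschitz bound.

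For part~(ii) I would look for an element $g$ that is \emph{elliptic} on $\mathcal C_{parab}(A)$ (equivalently on $\Gamma(A,X_{NP}^A)$) but \emph{loxodromic} on $\Gamma(A,X_P^A)$. In the braid case such a $g$ is easy to describe: fix a standard round curve $c=c_{ij}$ enclosing only a few punctures and choose $g\in Stab_{\Bnpo}(c)=N_{\Bnpo}(\mathcal B_{ij})$ acting trivially inside $c$ and as a \emph{pseudo-Anosov} on the complementary subsurface $Y$ obtained from $\Dnpo$ by cutting along $c$ and capping the resulting boundary with a once-punctured disk; such $g$ exist whenever $Y$ supports a pseudo-Anosov. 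Since $g$ fixes the vertex $c$, every power $g^k$ lies in $Stab_{\Bnpo}(c)\subset X_{NP}^A$, so $d_{X_{NP}^A}(1,g^k)\leqslant 1$. On the other hand $Y$ contains the outer boundary $\partial\Dnpo$, so boundary-connecting arcs project coarsely onto the arc graph of $Y$; as $g$ is pseudo-Anosov on $Y$ its subsurface projections grow linearly, and a bounded-geodesic-image estimate then forces the distances to grow without bound in $\mathcal A_{\partial}(\Dnpo)$, equivalently $d_{X_P^A}(1,g^k)\to\infty$. This contradicts any lower Lipschitz bound, so the identity $\Gamma(A,X_P^A)\to\Gamma(A,X_{NP}^A)$ is not a quasi-isometry.

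For part~(i) the goal is the opposite inequality: every absorbable element should be a product of \emph{boundedly many} elements of $X_{NP}^A$, giving $d_{X_{NP}^A}(1,g)\leqslant C$ for all $g\in X_{abs}^A$ and hence a bi-Lipschitz equivalence of the two metrics. Rather than attack this decomposition head-on, I would work inside the poset of hyperbolic structures of \cite{ABO}: the action on $\mathcal C_{AL}(A)\simeq\Gamma(A,X_{abs}^A)$ \cite{CalvezWiest1,CalvezWiest2} and the action on $\mathcal C_{parab}(A)\simeq\Gamma(A,X_{NP}^A)$ \cite{CGGMW} are both expected to be acylindrical, and in each the loxodromic elements should be precisely the pseudo-Anosov (Morse) elements of $A$. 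Two comparable acylindrical hyperbolic structures sharing the same set of loxodromics would necessarily be equivalent --- for instance because both realise the largest acylindrical action of $A$, unique up to equivalence \cite{ABO} --- and since Proposition~\ref{P:Comparation3} makes them comparable this yields $X_{NP}^A\sim X_{abs}^A$ and the asserted quasi-isometry, in particular identifying $\mathcal C_{AL}(\Bnpo)$ with $\mathcal C(\Dnpo)$ up to quasi-isometry.

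The principal obstacle differs between the two parts. In~(ii) the subtle point is to certify that a pseudo-Anosov supported on the \emph{proper} subsurface $Y$ is genuinely loxodromic on the whole-disk graph $\mathcal A_{\partial}(\Dnpo)$, which requires a subsurface-projection / bounded-geodesic-image argument rather than a naive length count; and beyond type $A_n$ one must reproduce this construction algebraically inside a normalizer $N_A(A_T)=Z_A(\Omega_T)$ (Proposition~\ref{P:Paris}), where no surface is available and one has to locate an element loxodromic on $\Gamma(A,X_P^A)$ yet fixing the vertex $A_T$ of $\mathcal C_{parab}(A)$. In~(i) the hard part is proving acylindricity of both actions and pinning down their loxodromic sets exactly; here the genuine difficulty is the one flagged in the introduction, namely that absorbable elements are so poorly understood that even the innocuous-looking statement ``every absorbable element has bounded $X_{NP}^A$-length'' appears to resist a direct combinatorial proof and to need the full acylindrical-uniqueness machinery.
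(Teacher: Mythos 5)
Your proposal addresses a statement that the paper itself records as a \emph{conjecture}: the paper proves only part~(ii), and only for braid groups on at least four strands (the unnumbered Proposition following Conjecture~\ref{C:StrictInequalities}), while part~(i) is explicitly left open, even for braids. For part~(ii) in the braid case, your argument is essentially the paper's own: there, one doubles a strand of a pseudo-Anosov $\beta\in\mathcal B_n$ to obtain $\hat\beta\in\mathcal B_{n+1}$ preserving the round curve around the first two punctures, so that $d_{X_{NP}^{\mathcal B_{n+1}}}(1,\hat\beta^k)\leqslant 1$ for all $k$; the complement of that curve is a \emph{hole} for $\mathcal A_{\partial}(\mathcal D_{n+1})$ in the sense of \cite[Definition 5.2]{MS}, and the linear lower bound on $d_{\mathcal A_{\partial}(\mathcal D_{n+1})}(1,\hat\beta^k)$ comes from the Masur--Schleimer distance estimate \cite[Theorem 5.14]{MS}, which is exactly the subsurface-projection/bounded-geodesic-image step you describe. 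So on (ii) you and the paper coincide, down to the identification of the one delicate point (certifying loxodromicity on $\mathcal A_{\partial}(\mathcal D_{n+1})$ rather than on the curve graph), and, like the paper, your construction says nothing about types other than $A_n$.

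Part~(i) is where your proposal has a genuine gap. The pivotal claim --- that two \emph{comparable} acylindrical hyperbolic structures with the same set of loxodromic elements must be equivalent, ``for instance because both realise the largest acylindrical action'' --- is not a theorem of \cite{ABO}, and the inference is circular: an action is the largest acylindrical one if it dominates \emph{all} acylindrical actions, a property not determined by its loxodromic set, so to place $\Gamma(A,X_{NP}^A)$ at the top you would first need to know that it dominates $\Gamma(A,X_{abs}^A)$ --- which is precisely the content of (i). Moreover, every hypothesis you feed into this step is itself open and at least as hard as (i): hyperbolicity of $\Gamma(A,X_{NP}^A)$, equivalently of $\mathcal C_{parab}(A)$, is only conjectured earlier in the paper; acylindricity of that action is unknown; the identification of the loxodromics of $\mathcal C_{AL}(A)$ and of $\mathcal C_{parab}(A)$ with the Morse/pseudo-Anosov elements is unknown; and since $A$ has infinite center, any acylindricity statement must in fact be formulated for $A/Z(A)$. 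Note finally that the paper's own conditional route to (i) is different and more concrete: Question~\ref{Q:FatTriangleQuestion} asks whether the ``fat'' equilateral triangles in $Cay(A)/\langle\Delta\rangle$ arising from absorbable elements have uniformly bounded image in $\Gamma(A,X_{NP}^A)$, and the subsequent Observation notes that a positive answer would imply (i). The paper also records Hamenst\"adt's warning that $\mathcal C_{AL}(\mathcal B_n)$ may be quasi-isometric to a tree --- that is, (i) may simply be false --- a possibility that any purported proof, including yours, must confront rather than presuppose away.
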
 


The truth of Conjecture~\ref{C:StrictInequalities}(i) would of course imply that $X_{NP}^A$ is a hyperbolic structure (and hence hyperbolicity of the graph of irreducible parabolic subgroups~$\mathcal C_{parab}(A)$). 
It would also imply that Garside normal forms are unparameterized quasi-geodesics in~$\mathcal C_{parab}(A)$ (since they are in $\mathcal C_{AL}(A)$); this would contrast with the recently announced example in \cite{RafiVerberne} of a family of geodesics in a mapping class group whose shadows in the corresponding curve graph are not unparameterized quasi-geodesics. 

Note that Conjecture~\ref{C:StrictInequalities}(i) is not even known to hold in the specific case of Artin's braid groups -- see \cite[Conjecture 1]{CalvezWiest1}. Indeed, as pointed out to us by Ursula Hamenstädt, it is very much conceivable that $\mathcal C_{AL}(\mathcal{B}_n)$ is quasi-isometric to a tree. 

\begin{proposition} Conjecture~\ref{C:StrictInequalities}(ii) does hold when $A$ is a braid group with at least 4 strands: for $n\geqslant 3$, the natural map $\mathcal A_{\partial}(\Dnpo) \to \mathcal C(\Dnpo)$ is not a quasi-isometry. 
\end{proposition}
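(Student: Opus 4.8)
The plan is to exploit two facts already recorded above: the natural map $\mathcal A_{\partial}(\Dnpo)\to\mathcal C(\Dnpo)$ is coarsely Lipschitz (this is exactly the inequality $X_{NP}^{\Bnpo}\preccurlyeq X_{P}^{\Bnpo}$, realized under the quasi-isometries of Propositions~\ref{P:Curves} and~\ref{P:PartialA}), and it is coarsely surjective. Hence it is a quasi-isometry if and only if it is a quasi-isometric \emph{embedding}, and to refute the latter it suffices to produce a single sequence of arcs escaping to infinity in $\mathcal A_{\partial}(\Dnpo)$ whose images stay bounded in $\mathcal C(\Dnpo)$. The right source for such a sequence is the orbit $\{\psi^k\cdot a\}$ of a mapping class $\psi$ chosen so that $\psi$ is \emph{reducible} in $\Bnpo$ (so that it is elliptic, with bounded orbits, in the curve graph) but is \emph{pseudo-Anosov on the complementary subsurface that carries the boundary} $\partial\Dnpo$ (so that it is loxodromic in the arc graph).

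Concretely, I would take $c=c_{12}$, let $\Sigma$ be the disk bounded by $c$ (containing $p_1,p_2$), and let $R=\overline{\Dnpo\setminus\Sigma}$ be the complementary subsurface, which crucially contains $\partial\Dnpo$. Since $\Dnpo$ has $n+1\geqslant 4$ punctures, $R$ is a sphere with at least four distinguished components ($\partial\Dnpo$, the curve $c$, and the punctures $p_3,\dots,p_{n+1}$), hence carries a pseudo-Anosov homeomorphism $\psi_0$; I extend $\psi_0$ by the identity on $\Sigma$ to obtain $\psi\in\Bnpo$. By construction $\psi$ is pseudo-Anosov on $R$ and fixes $c$, so it is reducible in $\Bnpo$. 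For the curve graph it is then immediate that $\psi$ has bounded orbits: since $\psi(c)=c$, for every vertex $d$ we have $d_{\mathcal C}(c,\psi^k d)=d_{\mathcal C}(c,d)$, so the $\psi$-orbit of $d$ has diameter at most $2\,d_{\mathcal C}(c,d)$. Because the natural map is coarsely $\Bnpo$-equivariant (being induced by the identity of $\Bnpo$ through the orbit-map quasi-isometries), the images of the points $\psi^k\cdot a$ remain within bounded distance of one another in $\mathcal C(\Dnpo)$, for \emph{any} fixed base arc $a$.

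For the arc graph I would set $a=a_2$, which lies in $R$ and is essential there, and measure growth of $\{\psi^k a\}$ using the subsurface projection $\pi_R$ to the arc-and-curve complex $\mathcal{AC}(R)$, in the spirit of the Masur--Minsky machinery \cite{MasurMinsky1}. The key point, and the feature distinguishing an \emph{outer} subsurface from an inner one, is that every essential boundary-connecting arc $v$ of $\Dnpo$ meets $R$ essentially: its endpoints lie on $\partial\Dnpo\subset\partial R$, so after minimizing intersection with $c$ either $v\subset R$, or $v$ crosses $c$ and $v\cap R$ contains an essential arc joining $\partial\Dnpo$ to $c$. Thus $\pi_R$ is defined on all of $\mathcal A_{\partial}(\Dnpo)$, and disjoint arcs have disjoint, hence $\mathcal{AC}(R)$-close, projections, so $\pi_R$ is coarsely Lipschitz. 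Since $\psi$ is supported on $R$ we have $\pi_R(\psi^k a)=\psi^k a$, and as $\psi$ is pseudo-Anosov on $R$ it acts loxodromically on $\mathcal{AC}(R)$, whence $d_{\mathcal{AC}(R)}(a,\psi^k a)\to\infty$; coarse Lipschitzness of $\pi_R$ then forces $d_{\mathcal A_{\partial}}(a,\psi^k a)\to\infty$. Combining this with the previous paragraph, $\{\psi^k a\}$ escapes to infinity in $\mathcal A_{\partial}(\Dnpo)$ while its image is bounded in $\mathcal C(\Dnpo)$, so the natural map is not a quasi-isometric embedding, and therefore not a quasi-isometry.

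The hard part is precisely the claim that $\pi_R$ is everywhere defined and coarsely Lipschitz, i.e.\ that a lower bound $d_{\mathcal A_{\partial}}\gtrsim d_{\mathcal{AC}(R)}$ holds along the orbit. This is subtle because, in general, a large subsurface projection does \emph{not} bound the ambient distance from below: a high power of a Dehn twist about an inner curve, for instance, has large projection to the corresponding subsurface yet bounded orbit in the ambient graph. What rescues the argument here is exactly that $R$ contains $\partial\Dnpo$, which prevents any boundary arc from projecting trivially and so restores the coarse-Lipschitz estimate; verifying this carefully (including that intermediate vertices along an $\mathcal A_{\partial}$-geodesic never have empty $R$-projection) is where I would concentrate the effort. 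The remaining ingredients---existence of the pseudo-Anosov on $R$, reducibility of $\psi$, and coarse equivariance of the map---are routine.
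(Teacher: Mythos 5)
Your proof is correct, and its skeleton is in fact identical to the paper's: the paper also takes the subsurface $X=\Dnpo\setminus D$, where $D$ is a round disk about the first two punctures (your $R$), also notes that this subsurface is what Masur--Schleimer call a \emph{hole} for $\mathcal A_{\partial}(\Dnpo)$ precisely because it carries $\partial\Dnpo$, and also uses a braid that is pseudo-Anosov on that subsurface while fixing the round curve $\partial D$ (the paper realizes it by doubling the first strand of a pseudo-Anosov $n$-braid $\beta$, which is the same thing as your extension-by-the-identity $\psi$), concluding ellipticity on $\mathcal C(\Dnpo)$ exactly as you do. Where you genuinely diverge is the engine for the lower bound in $\mathcal A_{\partial}(\Dnpo)$: the paper cites the Masur--Schleimer distance estimate \cite[Theorem 5.14]{MS} as a black box to convert loxodromic growth in $\mathcal C(X)$ into linear growth in $\mathcal A_{\partial}(\Dnpo)$, whereas you re-derive only the needed direction by hand, via the Masur--Minsky-style observation that a subsurface cut by \emph{every} vertex of the ambient graph has coarsely Lipschitz projection. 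Your route is more elementary and self-contained (the Lipschitz-projection lemma is the easy half of the machinery behind Theorem 5.14, and your worry about intermediate vertices with empty projection is exactly what the hole condition rules out), and it makes transparent \emph{why} containing $\partial\Dnpo$ is the decisive feature; the paper's route is shorter on the page at the cost of invoking a much heavier theorem. Two small points you should still write out: when an essential boundary-connecting arc $v$ is disjoint from $c$, its essentiality \emph{in} $R$ needs the one-line argument that the component of $\Dnpo\setminus v$ not containing $\Sigma$ contains a puncture of $R$ while the other contains $c$; and the loxodromicity of a pseudo-Anosov of $R$ on $\mathcal{AC}(R)$ should be reduced to the curve-graph statement \cite[Proposition 4.6]{MasurMinsky1} via the standard quasi-isometry $\mathcal{AC}(R)\simeq\mathcal C(R)$.
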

\begin{proof} 
We refer to \cite[Section 5]{MS}. Consider the open disk $D$ of radius 1 centered at~$\frac{3}{2}$ containing the first two punctures of $\mathcal D_{n+1}$ (see the beginning of Section \ref{Subsection:CurveGraph}); let $X =\mathcal D_{n+1}\setminus D$: 
this subsurface is a hole for $\mathcal A_{\partial}(\mathcal D_{n+1})$ according to \cite[Definition 5.2]{MS}. Note also that $X$ is homeomorphic to $\mathcal D_n$. 
Now, let $\beta$ be a pseudo-Anosov braid on $n$ strands and consider the $n+1$-strand braid $\hat\beta$ obtained from $\beta$ by doubling the first strand. Fix $c$ greater than the constant $C_0$ relative to the complex $\mathcal A_{\partial}(\mathcal D_{n+1})$ in \cite[Theorem 5.14]{MS}.
It is known \cite[Proposition 4.6]{MasurMinsky1} that $\beta$ acts in a loxodromic way on the curve graph of $\mathcal D_n$, hence it acts loxodromically on $\mathcal C(X)$, the curve graph of $X$. In other words, we can find $\alpha>0$ 
so that 
$$d_{\mathcal C(X)}(1,\beta^k)\geqslant \alpha k,$$ for every integer $k$, and in particular $d_{\mathcal C(X)}(1,\beta^k)\geqslant c,$ for $k$ big enough. Now, \cite[Theorem 5.14]{MS} implies that there exists a constant $A=A(c)\geqslant 1$ such that for $k$ big enough, $d_{\mathcal A_{\partial}(\mathcal D_{n+1})}(1,{\hat\beta}^k)\geqslant \frac{\alpha k-A}{A}$, 
whence $\hat \beta$ acts loxodromically on $\mathcal A_{\partial}(\mathcal D_{n+1})$. However, as it preserves the round curve bounding~$D$, $\hat\beta$ acts elliptically on $\mathcal C(\mathcal D_{n+1})$.  This finishes the proof of Conjecture~\ref{C:StrictInequalities}(ii) for braid groups.
\end{proof}

\begin{question}
Is every large piece of 2-dimensional quasi-flat in the Cayley graph of a mapping class group sent to a subset of bounded diameter by the projection to the curve complex?
\end{question}

This question is deliberately vague, but to the best of our knowledge, no result along these lines is known, except that \emph{maximal-dimensional} quasi-flats are known to be squashed down to bounded diameter in the curve complex~\cite{BehrstockMinsky,BKMM}. We will  give a more precise version of the question below, and we will show that a positive answer would imply Conjecture~\ref{C:StrictInequalities}(i).

Let us look at the Cayley graph of~$A$ with respect to the Garside generators, modulo the $\Delta$-action: for any $z\in A$, all the vertices corresponding to elements of the form $x\Delta^k$ ($k\in\mathbb Z$) get identified, and if any two edges in the Cayley graph have the same endpoints after the $\Delta$-action, then they get identified as well. The quotient space is quasi-isometric to the Cayley graph of $A/Z(A)$, and we will denote it $Cay(A)/\langle\Delta\rangle$ -- see \cite{CalvezWiest1} for a detailed account. 

Now suppose an element $y$ is absorbed by~$x$, which by \cite[Lemma 3]{CalvezWiest1} can be supposed to have the same length as~$y$; thus $\inf(x)=\inf(y)=\inf(xy)=0$ and $\sup(x)=\sup(y)=\sup(xy)=L$. We will study the implications for the geometry of the graph $Cay(A)/\langle\Delta\rangle$. There is an equilateral triangle with corners $1$, $x$ and~$xy$ (red in Figure~\ref{F:AbsTriang}), and with sides of length~$L$ (blue in the figure) representing the Garside normal form words for $x$, $y$, and $xy$. 
Let us denote the Garside normal form words by $x=x_1 x_2 \ldots x_L$, and $y=y_1 y_2\ldots y_L$. Now, if $1\leqslant \ell \leqslant L$, then $x$ also absorbs $y_1\ldots y_\ell$. This means that in $Cay(A)/\langle\Delta\rangle$ we have $d(1,xy_1\ldots y_\ell)=L$: every vertex on the edge between $x$ and~$xy$ is at the same distance (namely~$L$) from the opposite corner of the triangle (namely~$1$). More generally, if $1\leqslant \ell^y \leqslant \ell^x\leqslant L$, then $x_{L-\ell^x+1}\ldots x_L$ absorbs $y_1\ldots y_{\ell^y}$, and $d(x_1\ldots x_{L-\ell^x}, xy_1\ldots y_{\ell^y})=\ell^x$.

\begin{figure}[htb] 
\begin{center}
\pgfdeclarelayer{triangle}
\pgfdeclarelayer{vertices}
\pgfsetlayers{main,triangle,vertices} 
\colorlet{trcolor}{blue}
\colorlet{vercolor}{red}
\begin{tikzpicture}[scale=0.9]
\draw (0,-\eps) -- (0,5+\eps);
\draw (\xdil*.5,-\eps) -- (\xdil*.5,5+\eps);
\draw (\xdil,-\eps) -- (\xdil,5+\eps);
\draw (\xdil*1.5,-\eps) -- (\xdil*1.5,5+\eps);
\draw (\xdil*2,-\eps) -- (\xdil*2,5+\eps);
\draw (\xdil*2.5,-\eps) -- (\xdil*2.5,5+\eps);
\draw (-\xdil*\eps,-\eps) -- (\xdil*2.5+\xdil*\eps,2.5+\eps);
\draw (-\xdil*\eps,1-\eps) -- (\xdil*2.5+\xdil*\eps,3.5+\eps);
\draw (-\xdil*\eps,2-\eps) -- (\xdil*2.5+\xdil*\eps,4.5+\eps);
\draw (-\xdil*\eps,3-\eps) -- (\xdil*2+\xdil*\eps,5+\eps);
\draw (-\xdil*\eps,4-\eps) -- (\xdil*1+\xdil*\eps,5+\eps);
\draw (-\xdil*\eps,5-\eps) -- (\xdil*\eps,5+\eps);
\draw (\xdil*1-\xdil*\eps,-\eps) -- (\xdil*2.5+\xdil*\eps,1.5+\eps);
\draw (\xdil*2-\xdil*\eps,-\eps) -- (\xdil*2.5+\xdil*\eps,.5+\eps);
\draw (-\xdil*\eps,5+\eps) -- (\xdil*2.5+\xdil*\eps,2.5-\eps);
\draw (-\xdil*\eps,4+\eps) -- (\xdil*2.5+\xdil*\eps,1.5-\eps);
\draw (-\xdil*\eps,3+\eps) -- (\xdil*2.5+\xdil*\eps,.5-\eps);
\draw (-\xdil*\eps,2+\eps) -- (\xdil*2+\xdil*\eps,-\eps);
\draw (-\xdil*\eps,1+\eps) -- (\xdil*1+\xdil*\eps,-\eps);
\draw (-\xdil*\eps,+\eps) -- (\xdil*\eps,-\eps);
\draw (\xdil*1-\xdil*\eps,5+\eps) -- (\xdil*2.5+\xdil*\eps,3.5-\eps);
\draw (\xdil*2-\xdil*\eps,5+\eps) -- (\xdil*2.5+\xdil*\eps,4.5-\eps);

\begin{pgfonlayer}{triangle}
\draw[trcolor,line width=1.5] (0,0) -- (0,5) -- (\xdil*2.5,2.5) -- (0,0);
\filldraw [trcolor] (0,1) circle [radius=\rad];
\filldraw [trcolor] (0,2) circle [radius=\rad];
\filldraw [trcolor] (0,3) circle [radius=\rad];
\filldraw [trcolor] (0,4) circle [radius=\rad];
\filldraw [trcolor] (\xdil*.5,.5) circle [radius=\rad];
\filldraw [trcolor] (\xdil*1,1) circle [radius=\rad];
\filldraw [trcolor] (\xdil*1.5,1.5) circle [radius=\rad];
\filldraw [trcolor] (\xdil*2,2) circle [radius=\rad];
\filldraw [trcolor] (\xdil*.5,4.5) circle [radius=\rad];
\filldraw [trcolor] (\xdil*1,4) circle [radius=\rad];
\filldraw [trcolor] (\xdil*1.5,3.5) circle [radius=\rad];
\filldraw [trcolor] (\xdil*2,3) circle [radius=\rad];
\end{pgfonlayer}

\begin{pgfonlayer}{vertices}
\filldraw [vercolor] (0,0) circle [radius=\rad];
\filldraw [vercolor] (0,5) circle [radius=\rad];
\filldraw [vercolor] (\xdil*2.5,2.5) circle [radius=\rad];

\draw[vercolor] (-0.4,0) node{$1$};
\draw[vercolor] (-0.5,5) node{$xy$};
\draw[vercolor] (\xdil*2.5+0.4,2.5) node{$x$};
\end{pgfonlayer}
\end{tikzpicture}
\end{center}
\caption{A typical fat equilateral triangle in $Cay(A)/\langle\Delta\rangle$, here with $L=5$. For instance, in $A=\mathcal B_4$, one can take $x=\sigma_1^5$, $y=\sigma_3^5$ and $xy=(\sigma_1\sigma_3)^5$.}
\label{F:AbsTriang}
\end{figure}
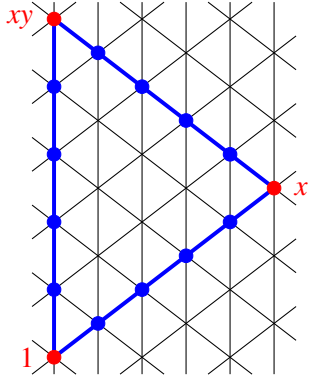

Another crucial observation is that the three sides of the triangle play completely symmetrical roles: if $x$ absorbs~$y$, then $y$ absorbs $(xy)^{-1}\Delta^L$ and $\Delta^L (xy)^{-1}$ absorbs $x$. 
In particular, every vertex on every side of the triangle (between 1 and~$x$, or between $x$ and~$xy$, or between $xy$ and~$1$) is at distance~$L$ from the opposite corner. More generally, if two vertices of $Cay(A)/\langle\Delta\rangle$ lie on two different sides of the triangle, and if they are at distance $d_1$ and $d_2$, respectively, from the corner of the triangle shared by the two sides, then their distance in~$Cay(A)/\langle\Delta\rangle$ is $\max(d_1,d_2)$. Let us call such a triangle ``fat'' -- indeed, such a triangle is not at all thin (in the sense of $\delta$-hyperbolicity), but on the contrary all distances are at least as large as in a Euclidean comparison triangle.

\begin{question}\label{Q:FatTriangleQuestion}
We have seen that every absorbable element gives rise to a \emph{fat} equilateral triangle in $Cay(A)/\langle\Delta\rangle$. Are the images of all such triangles in $\Gamma(A,X_{NP}^A)$ (or, equivalently, in the complex of irreducible parabolic subgroups) of uniformly bounded diameter? For $A=\Bnpo$, do the images in $\mathcal C(\Dnpo)$ of all fat equilateral triangles have uniformly bounded diameter?
\end{question}

Notice that for every fat triangle induced from an absorbable element, the image in $\mathcal C_{AL}(A)$ is of diameter at most~$2$ (because the image of every edge is of diameter~$1$). 

\begin{observation} A positive answer to Question~\ref{Q:FatTriangleQuestion}  would imply that Conjecture~\ref{C:StrictInequalities}(i)  is true.
\end{observation}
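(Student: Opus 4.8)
The plan is to deduce the missing inequality $X_{NP}^A\preccurlyeq X_{abs}^A$ from a positive answer to Question~\ref{Q:FatTriangleQuestion}; combined with the inequality $X_{abs}^A\preccurlyeq X_{NP}^A$ already established in Proposition~\ref{P:Comparation3}, this yields the two-sided equivalence, i.e.\ that the identity map $\Gamma(A,X_{NP}^A)\to\Gamma(A,X_{abs}^A)$ is a quasi-isometry, which is exactly the content of Conjecture~\ref{C:StrictInequalities}(i). By the definition of $\preccurlyeq$, proving $X_{NP}^A\preccurlyeq X_{abs}^A$ amounts to bounding $d_{X_{NP}^A}(1_A,y)$ uniformly over all generators $y\in X_{abs}^A$. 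Since $A$ has rank at least $3$, the set $X_{abs}^A$ consists exactly of the absorbable elements, so it suffices to bound $d_{X_{NP}^A}(1_A,y)$ over all absorbable $y$.

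First I would fix an absorbable element $y$ and attach to it its fat triangle. By \cite[Lemma 3]{CalvezWiest1} one may choose an element $x$ absorbing $y$ of the same canonical length, so that $\inf(x)=\inf(y)=\inf(xy)=0$ and $\sup(x)=\sup(y)=\sup(xy)=L$; as explained in the paragraphs preceding Question~\ref{Q:FatTriangleQuestion}, this produces a fat equilateral triangle in $Cay(A)/\langle\Delta\rangle$ with corners $1$, $x$ and~$xy$, in which the side joining $x$ to $xy$ is the Garside normal form of~$y$. The key point is that $d_{X_{NP}^A}$ is left-invariant, so $d_{X_{NP}^A}(1_A,y)=d_{X_{NP}^A}(x,xy)$; since $x$ and~$xy$ are two corners of the triangle, this distance is at most the diameter of the image of the triangle in $\Gamma(A,X_{NP}^A)$. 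A positive answer to Question~\ref{Q:FatTriangleQuestion} asserts precisely that these diameters are bounded by a single constant $D$ independent of~$y$, yielding $d_{X_{NP}^A}(1_A,y)\leqslant D$ for every absorbable~$y$, as required.

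The one technical point to treat carefully is the passage from $Cay(A)/\langle\Delta\rangle$ to $\Gamma(A,X_{NP}^A)$: the corners of the triangle are $\langle\Delta\rangle$-cosets, whereas $d_{X_{NP}^A}$ lives on $A$ itself. However, since $\langle\Delta^2\rangle\subseteq X_{NP}^A$ and $\Delta$ is a product of boundedly many generators of $X_{NP}^A$ (each $s\in S$ normalises a standard parabolic subgroup containing it), any two representatives of a single $\langle\Delta\rangle$-coset lie at $d_{X_{NP}^A}$-distance at most $1+\|\Delta\|_{X_{NP}^A}$. Hence the map from vertices of $Cay(A)/\langle\Delta\rangle$ to $\Gamma(A,X_{NP}^A)$ obtained by choosing representatives is well defined up to bounded error, and the estimate above survives with $D$ replaced by $D+2(1+\|\Delta\|_{X_{NP}^A})$.

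This bookkeeping is routine; the genuinely nontrivial input is the uniform geometric control over the images of fat triangles supplied by Question~\ref{Q:FatTriangleQuestion}. That control is exactly the obstruction: a single absorbable element forces $x$ and $xy$ to be \emph{far apart} in $Cay(A)/\langle\Delta\rangle$ (distance~$L$, with $L$ unbounded), so the whole difficulty is to see that these two corners nevertheless become \emph{close} after projecting to $\Gamma(A,X_{NP}^A)$—equivalently, for braid groups, that the $\mathcal C(\Dnpo)$-shadow of each fat triangle collapses. This is the step I expect to be the main obstacle, and it is precisely why Question~\ref{Q:FatTriangleQuestion} is the crux of Conjecture~\ref{C:StrictInequalities}(i).
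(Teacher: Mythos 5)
Your proof is correct and is exactly the argument the paper leaves implicit (the Observation is stated without any written proof): a uniform bound $D$ on the diameters of fat-triangle images in $\Gamma(A,X_{NP}^A)$ bounds $d_{X_{NP}^A}(x,xy)=d_{X_{NP}^A}(1,y)$ for every absorbable $y$ by left-invariance, giving $X_{NP}^A\preccurlyeq X_{abs}^A$, which together with $X_{abs}^A\preccurlyeq X_{NP}^A$ from Proposition~\ref{P:Comparation3} yields the equivalence asserted in Conjecture~\ref{C:StrictInequalities}(i); your $\langle\Delta\rangle$-coset bookkeeping via $\langle\Delta^2\rangle\subseteq X_{NP}^A$ and $\|\Delta\|_{X_{NP}^A}<\infty$ is also the right way to pass from $Cay(A)/\langle\Delta\rangle$ to $\Gamma(A,X_{NP}^A)$. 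The only point you gloss is that the normalization $\inf(y)=0$, $\sup(y)=L$ covers only half the absorbable elements; for those with $\sup(y)=0$ one replaces $y$ by $y^{-1}$, which is again absorbable and has the same $X_{NP}^A$-length since $X_{NP}^A$ is a union of subgroups and hence symmetric --- the paper's fat-triangle discussion glosses the same case.
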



\end{document}